\newcommand{\ba}{\begin{array}}
\newcommand{\ea}{\end{array}}
\newcommand{\be}{\begin{equation}}
\newcommand{\ee}{\end{equation}}
\newcommand{\ben}{\begin{equation*}}
\newcommand{\een}{\end{equation*}}
\newcommand{\bd}{\begin{displaymath}}
\newcommand{\ed}{\end{displaymath}}
\newcommand{\bi}{\begin{itemize}}
\newcommand{\ei}{\end{itemize}}
\newcommand{\bn}{\begin{enumerate}}
\newcommand{\en}{\end{enumerate}}
\newtheorem{lemma}{Lemma}
\newtheorem{theorem}{Theorem}
\newtheorem{remark}{Remark}
\newtheorem{assumption}{Assumption}
\title{Convergence of finite difference methods \\for the wave equation in two space dimensions}
\author{Siyang Wang\thanks{Division of Scientific Computing, Department
    of Information Technology, Uppsala University, SE-751 05 Uppsala,
    Sweden. \href{mailto:siyang.wang@it.uu.se}{Email: siyang.wang@it.uu.se}}, ~ Anna Nissen\thanks{Division of Numerical Analysis, Department of Mathematics, Royal Institute of Technology, SE-100 44, Stockholm, Sweden. \href{mailto:siyang.wang@it.uu.se}{}},~ and Gunilla Kreiss\thanks{Division of Scientific Computing, Department
    of Information Technology, Uppsala University, SE-751 05 Uppsala,
    Sweden. \href{mailto:siyang.wang@it.uu.se}{}}}
\begin{document}
\maketitle

\begin{abstract}
When using a finite difference method to solve an initial--boundary--value problem, the truncation error is often of lower order at a few grid points near boundaries than in the interior.  Normal mode analysis is a powerful tool to analyze the effect of the large truncation error near boundaries on the overall convergence rate, and has been used in many previous literatures for different equations. However, existing work only concerns problems in one space dimension. In this paper, we extend the analysis to problems in two space dimensions. The two dimensional analysis is based on a diagonalization procedure that decomposes a two dimensional problem to many one dimensional problems of the same type. We present a general framework of analyzing convergence for such one dimensional problems, and explain how to obtain the result for the corresponding two dimensional problem. In particular, we consider two kinds of truncation errors in two space dimensions: the truncation error along an entire boundary, and the truncation error localized at a few grid points close to a corner of the computational domain. The accuracy analysis is in a general framework, here applied to the second order wave equation. Numerical experiments corroborate our accuracy analysis. 
\end{abstract}

\textbf{Keywords}: Convergence rate, Accuracy, Two space dimensions, Normal mode analysis, Finite difference method, Second order wave equation

%
%
%

\section{Introduction}
Wave propagation problems can often be efficiently discretized with high order finite difference methods. Due to stability consideration, the formal accuracy order of the discretization scheme is typically considerably lower close to computational boundaries than that of the interior scheme. However, the numerical solution often converges at a rate higher than indicated by the boundary truncation error, a phenomenon termed as \emph{gain in convergence}. This phenomenon can partly be understood from the fact that the number of grid points with the lower order stencil is independent of grid spacing. Analysis is needed to determine the precise order of gain in convergence. 

There are two different methods for analyzing how much is gained in the convergence rate, the energy method and the normal mode analysis \cite{Gustafsson2008,Gustafsson2013}. Applying the energy method in a straightforward way indicates a half order gain in convergence. An exception is found in \cite{Abarbanel2000}, where a careful energy analysis performed to the heat equation gives a gain of one and a half orders in convergence. However, the computations show a gain of two orders. 

Sharp error estimates can be obtained by the normal mode analysis. In \cite{Svard2006}, it is noted by a normal mode approach that the gain in convergence can equal to the highest order of spatial derivatives in the equation. We refer to this gain as optimal. When a so--called determinant condition is satisfied, normal mode analysis can straightforwardly be used to prove that  the gain in convergence is at least optimal. For first order hyperbolic equations, technical assumptions of finite difference schemes are given in \cite{Gustafsson1975} under which the determinant condition is satisfied, thus one order is gained in convergence. A detailed analysis in \cite{Nissen2012,Nissen2013} for a class of discretizations for the Schr\"{o}dinger equation proves the gain is two orders, which is equal to the optimal gain. Results for the wave equation are presented in \cite{Wang2016}, where it is shown that the gain in convergence is not unified, but depends on boundary conditions and numerical boundary treatments. For both the Schr\"{o}dinger equation and the wave equation, the determinant condition is not satisfied in many of the cases considered, even though the schemes are stable. We remark that the theoretical convergence rate obtained from the normal mode analysis is in a generalized sense, analogue to the concept of stability in the generalized sense \cite[Chapter 12.3]{Gustafsson2013}. 

The accuracy analyses in the above mentioned references are limited to problems in one space dimension. However, the one dimensional analysis cannot always explain two dimensional numerical results. One example is two dimensional problems discretized in a multi--block setting, where numerical solutions in regions with different grid spacings are coupled using interpolation \cite{Kozdon2015,Mattsson2010,Nissen2015}. For cases like this, truncation errors located at grid points along the interface between two mesh blocks are of lower accuracy order compared to interior truncation errors. This is because of one--sided difference stencils on each side of the interface. At a few grid points on the edge of the interface, the accuracy of the truncation error is often one or two additional orders lower, caused by one--sided interpolations. We refer to such points as \textit{corner points}, shown in Figure \ref{fig:grid}(A).
The number of corner points depends on the particular discretization but is independent of grid spacing. Because the dominating truncation errors are localized in both spatial dimensions, it is a situation that does not occur in one dimensional problems. Numerical results in the literature indicate that such two dimensional cases may lead to higher convergence rates than what is predicted by the corresponding one dimensional analysis, see \cite{Kramer2009} for the advection equation, \cite{Nissen2012} for the Schr\"{o}dinger equation and \cite{Wang2016a} for the wave equation. The numerical results also indicate that depending on the partial differential equation and the numerical interface treatment, the gain in convergence rates may be different. Accuracy analysis for two dimensional problems is needed in order to fully understand these results.

\begin{figure}
	\subfloat[]{\includegraphics[width=0.33\textwidth]{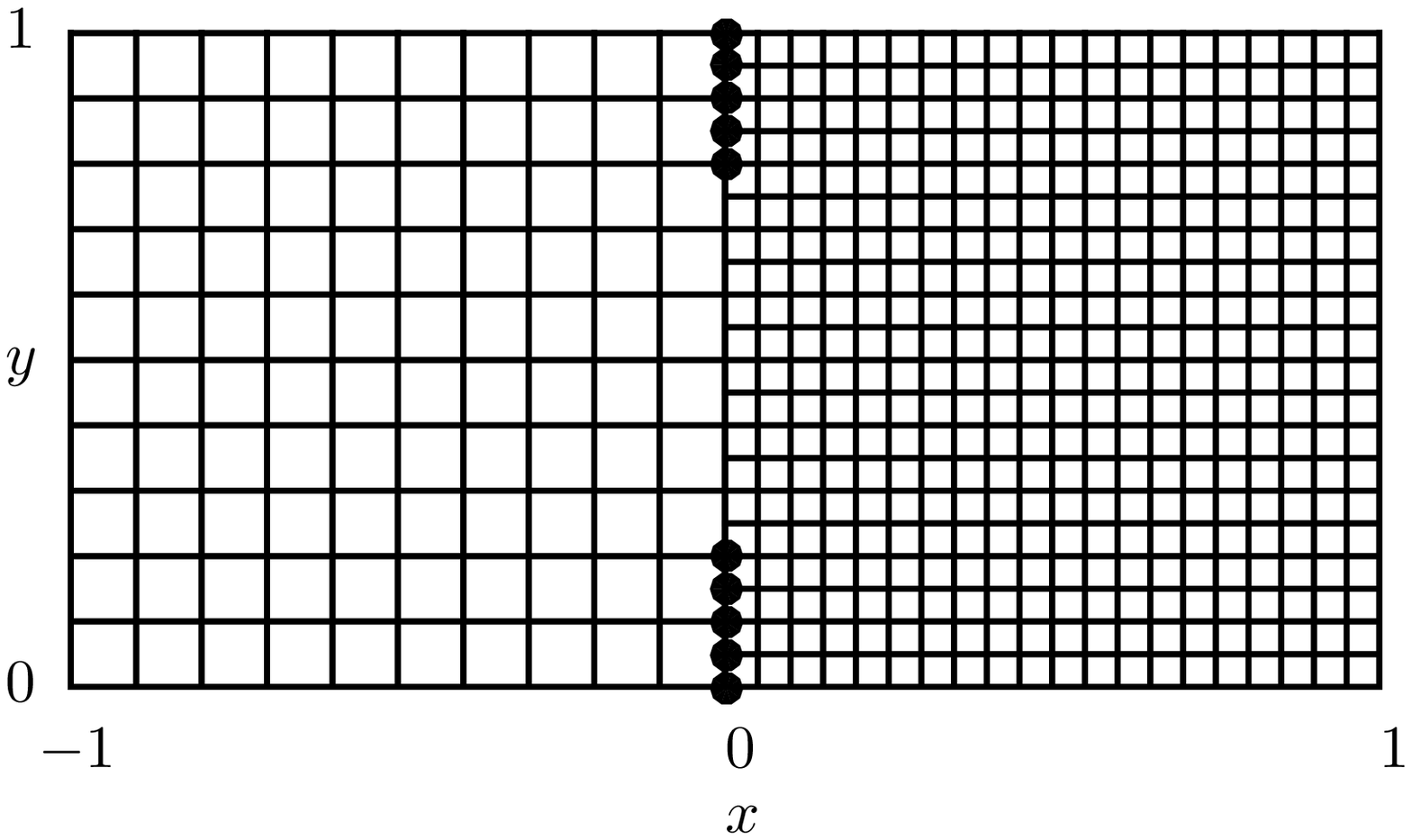}}
	\subfloat[]{\includegraphics[width=0.33\textwidth]{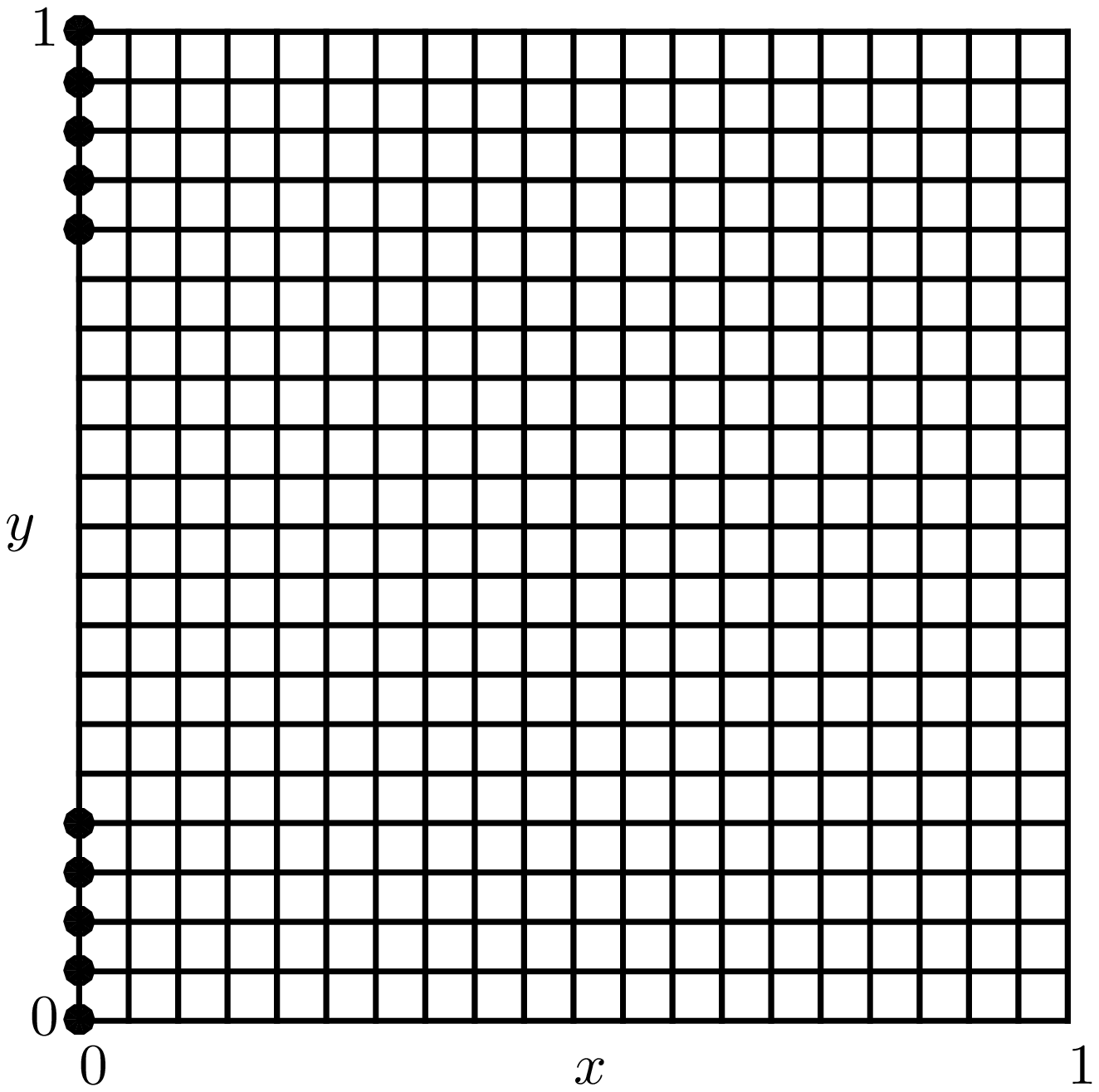}}
	\subfloat[]{\includegraphics[width=0.33\textwidth]{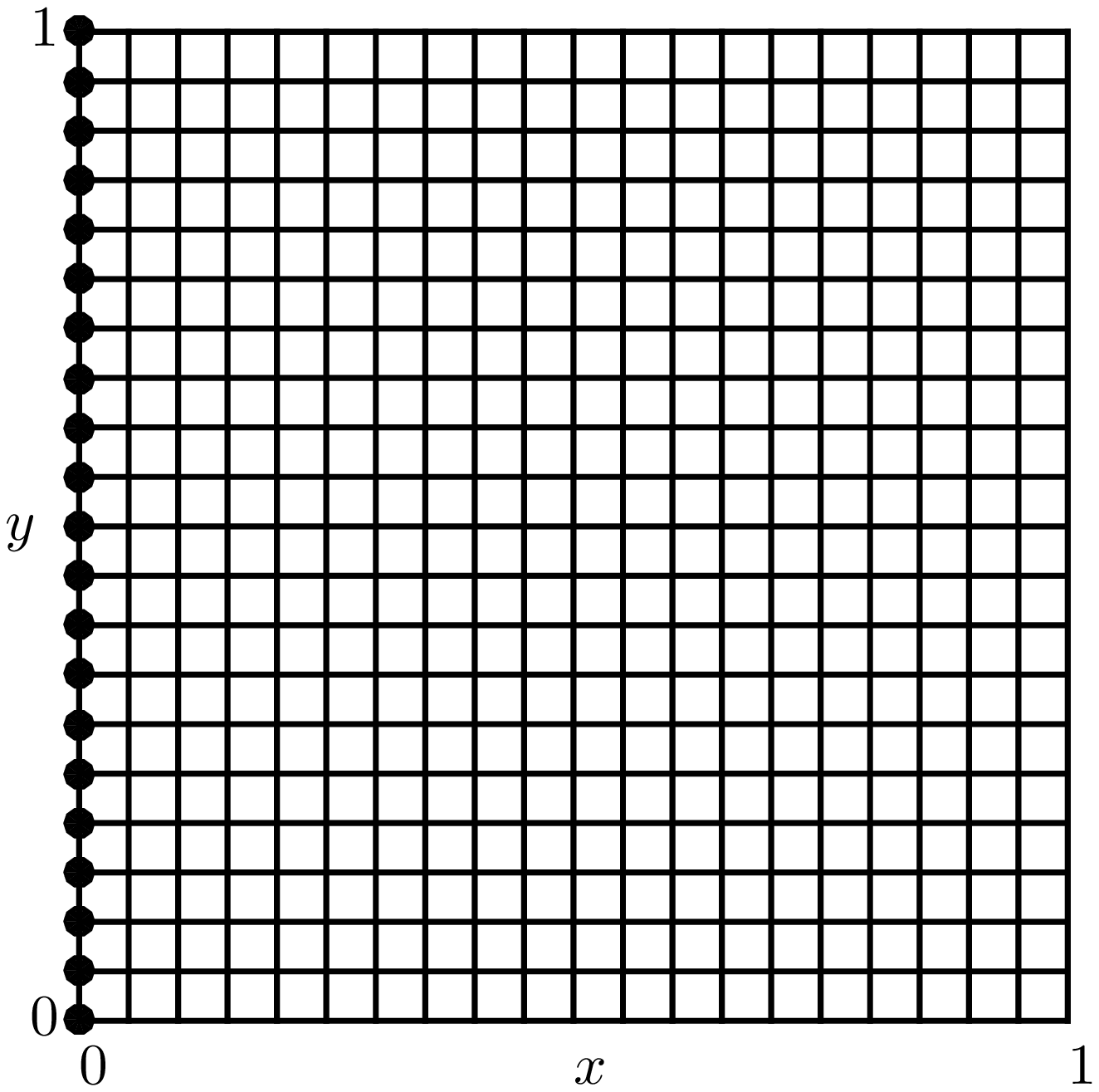}}
	\caption{(A) A multi--block grid with the dominating truncation error on the \emph{corner points} marked by filled circles. (B) A simplified model of the multi--block grid. (C) A single--block grid with the dominating truncation error along an entire boundary.}
	\label{fig:grid}
\end{figure}

In this paper, we present a general accuracy analysis framework for semi--discrete partial differential equations in two space dimensions, to better understand the effect localized truncation errors have on the overall spatial discretization error. As a model problem, we consider the semi--discretization of the wave equation by finite difference methods. High order methods solve  wave propagation problems more efficiently than low order methods on smooth domains \cite{Hagstrom2012,Kreiss1972}. However, it is a challenging task to construct stable and high order accurate methods for wave equations in the presence of boundaries and interfaces. One way to do this is to combine the summation--by--parts (SBP) finite difference method with the simultaneous approximation term (SAT) to impose boundary conditions. The SBP--SAT finite difference method has been successfully used to solve many types of differential equations numerically, and is our choice for the spatial discretization of the wave equation. However, the technique we develop to analyze accuracy is not limited to this class of methods. 

Since the equation is linear, we analyze the interior truncation error and boundary truncation error  separately. The interior truncation error can be analyzed straightforwardly by the energy method.  For the lower order boundary truncation error, we use again the superposition principle and analyze separately the effect of truncation errors along different parts of boundaries. Therefore, in the analysis we will focus on the truncation error along one boundary $x=0$ and consider two cases shown in Figure \ref{fig:grid}(B) and \ref{fig:grid}(C). In Figure \ref{fig:grid}(B), the dominating truncation error is located at only a few grid points on a boundary, i.e. localized in two space dimensions. This is a simplified model of the case shown in Figure \ref{fig:grid}(A). In addition, we also consider the case shown in Figure \ref{fig:grid}(C), which corresponds to when the dominating truncation error is located along an entire boundary. To further simplify the analysis, we will consider semi--infinite domains, where the boundaries at $x=1$ in Figure \ref{fig:grid}(B) and Figure \ref{fig:grid}(C) are moved to infinity. This separation of boundaries is justified by the same arguments which justify the separation in stability analysis \cite{Gustafsson2013}. 

The accuracy analysis relies on a transformation of the two dimensional problem to many one dimensional problems of the same type.  Each one dimensional problem is then analyzed by the normal mode analysis, a technique involving Laplace transformation in time. We show how for a one dimensional problem the error is dictated by the behaviour of a boundary system in the origin of the $s$--plane, where $s$ is the Laplace dual variable of time. This was discussed previously in \cite{Nissen2012,Nissen2013,Wang2016}, and in this paper we make the arguments more precise. We also show how to use the one dimensional results to obtain results for both two dimensional cases.

The outline of the paper is as follows. In Section 2 we present the two dimensional accuracy analysis, as well as the accuracy analysis for the relevant one dimensional problems. Properties for the semi--discrete system discretized with SBP operators and weak treatment of boundary conditions using the SAT method are described in Section \ref{sec-SBP}. Sections \ref{sec_NE} and \ref{sec_C} contain numerical experiments and conclusions, respectively.

\section{Accuracy analysis for the two dimensional wave equation}
We consider the two dimensional wave equation 
\begin{equation}\label{eqn_2d}
U_{tt}=U_{xx}+U_{yy}+F,
\end{equation}
on a domain 
\begin{equation*}
0\leq x<\infty,\quad 0\leq y\leq 1,\quad 0\leq t\leq t_f,
\end{equation*}
with suitable initial and boundary conditions so that \eqref{eqn_2d} is wellposed. We in particular consider the Dirichlet and Neumann boundary conditions. A comprehensive  discussion on the wellposedness of second order hyperbolic equations is found in \cite{Kreiss2012}.  Because the focus in this paper is the accuracy analysis of numerical methods solving equation \eqref{eqn_2d}, we assume that the solution is sufficiently smooth and in L$_2$ at any time.  This  gives rise to compatibility conditions between the initial and boundary data at the space--time corner. One way to guarantee that the compatibility conditions are satisfied is to assume that both spatial derivatives of the initial data on the boundary, and temporal derivatives of the boundary data at $t=0$, vanish up to sufficiently high order. We take this approach in the present work.

The domain is discretized by an equidistant grid with a grid spacing $h$ in both spatial directions
\begin{equation}\label{grid_1D}
\begin{split}
& x_i=(i-1)h,\ i=1,\cdots, \\
& y_j=(j-1)h,\ j=1,\cdots,N_y, 
\end{split}
\end{equation}
with $h=1/(N_y-1)$. As we will measure errors in both one dimensional and two dimensional spaces, we distinguish between three different discrete norms. Let $w$ be a two dimensional grid function on the grid \eqref{grid_1D}. We define the discrete norm as
\begin{equation*}
\|w\|_{2D}^2=h^2\sum_{i=1}^{N_y}\sum_{j=1}^{\infty} |w_{i,j}|^2.
\end{equation*}
We will also need norms of restrictions of grid functions to lines with constant $x$ or $y$ coordinates. They are denoted by 
\begin{equation*}
\|w_{i,:}\|_{1D,x}^2=h\sum_{j=1}^{\infty} |w_{i,j}|^2, \quad \|w_{:,j}\|_{1D,y}^2=h\sum_{i=1}^{{N_y}} |w_{i,j}|^2.
\end{equation*}

The spatial derivatives are approximated by finite difference operators yielding the semi--discretization 
\begin{equation}\label{semi_2d}
u_{tt}=\frac{1}{h^2}(Q_x\otimes I_y)u+\frac{1}{h^2}(I_x\otimes Q_y)u+F_h.
\end{equation}
In equation \eqref{semi_2d}, $u$ is a grid function approximating the true solution $U$ on the grid and is arranged column--wise, i.e. the first $N_y$ components of $u$ are the numerical solutions at  the grid points on the boundary $x=0$. The finite difference operators $Q_x/h^2$ and $Q_y/h^2$ approximate second derivatives in space,  including an implementation of the boundary conditions, and $I_x$, $I_y$ are identity operators. The Kronecker product $\otimes$ is used to extend the operators from one space dimension to two space dimensions. The grid function $F_h$ is the projection of the forcing function $F(x,y,t)$ on the grid. Inhomogeneous boundary data would also appear in the right--hand side of \eqref{semi_2d}, but is excluded here for a sake of simplified notation as it has no effect on the accuracy analysis. 

To consider accuracy, we need the semi--discretization \eqref{semi_2d} to be stable. One way to prove stability is to show by the energy method that the numerical solution in some appropriate norm is bounded by the data, and such a scheme is called energy stable. Another way is to prove stability in a generalized sense by the Laplace transform technique in the normal mode analysis framework, see details in \cite[Chapter 12]{Gustafsson2013}.  In particular, we consider operators $Q_y$ that satisfies the following property.

\begin{assumption}\label{antar_Qy}
There exists a symmetric positive definite operator $P$ such that with $Q_y$ in \eqref{semi_2d} the product $PQ_y$ is symmetric negative semi--definite, and the spectral norms of $P^{1/2}$ and $P^{-1/2}$ are uniformly bounded. 
\end{assumption}

The spectral norm, denoted by $\|\cdot\|$, is induced from the standard Euclidean vector norm. For any value $N_y$, operators in the $y$--direction can be represented by matrices. Assumption \ref{antar_Qy} on the operator $Q_y$ ensures that an energy estimate can be obtained when $Q_y$ is used as the spatial discretization operator in the corresponding one dimensional problem.  This assumption can be satisfied by many kinds of discretizations for which a standard energy estimate can be obtained. One example is a finite difference operator satisfying a summation--by--parts (SBP) property with a weak imposition of boundary conditions and properly chosen penalty parameters \cite{Appelo2007,Mattsson2009}. In this case, $H=hP$ is the operator associated with the SBP norm with the grid spacing $h$, and the condition number of $P^{1/2}$ is independent of $h$.  The SBP finite difference method is discussed in more detail in Section \ref{sec-SBP}.  

The following lemma describes an important property of the discretization operator $Q_y$ that will be needed for the two dimensional accuracy analysis.
\begin{lemma}\label{lemma_eig}
Consider the eigenvalue problem
\begin{equation}\label{eigen1}
\frac{1}{h^2}Q_y\varphi=-\lambda\varphi,
\end{equation}
with $Q_y$ in \eqref{semi_2d}. Under Assumption \ref{antar_Qy}, $Q_y$ is diagonalizable $Q_y/h^2=-\Phi\Lambda\Phi^{-1}$ by $\Phi=[\varphi_1,\varphi_2,\cdots,\varphi_{N_y}]$, where $\Lambda$ is a diagonal matrix with real and non--negative diagonal entries. In addition, $\|\Phi\|=\|P^{-1/2}\|$ and $\|\Phi^{-1}\|=\|P^{1/2}\|$ are uniformly bounded with respect to $N_y$.
\end{lemma}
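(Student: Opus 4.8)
The plan is to exploit Assumption \ref{antar_Qy} to symmetrize the problem. First I would introduce the similarity transformation $\widetilde{Q}_y := P^{1/2} Q_y P^{-1/2}$. Since $PQ_y$ is symmetric negative semi--definite and $P$ is symmetric positive definite, we can write $\widetilde{Q}_y = P^{-1/2}(PQ_y)P^{-1/2}$, which is a congruence of a symmetric negative semi--definite matrix and is therefore itself symmetric negative semi--definite. Hence $\widetilde{Q}_y$ admits an orthogonal eigendecomposition $\widetilde{Q}_y/h^2 = -V\Lambda V^{T}$ with $V$ orthogonal ($V^T V = I$) and $\Lambda = \mathrm{diag}(\lambda_1,\dots,\lambda_{N_y})$ having real, non--negative entries. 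This is the step that does the real work: it converts a non--symmetric eigenvalue problem into a symmetric one at the cost of a bounded change of basis.

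Next I would transfer this back to $Q_y$ itself. From $\widetilde{Q}_y = P^{1/2} Q_y P^{-1/2}$ we get $Q_y/h^2 = -\big(P^{-1/2}V\big)\Lambda\big(P^{-1/2}V\big)^{-1}$, so setting $\Phi := P^{-1/2}V$ gives the claimed factorization $Q_y/h^2 = -\Phi\Lambda\Phi^{-1}$ with the same real, non--negative $\Lambda$. The columns of $\Phi$ are exactly the eigenvectors $\varphi_j$ solving \eqref{eigen1}, so $\Phi = [\varphi_1,\dots,\varphi_{N_y}]$ as stated, and $\Phi$ is invertible since $P^{-1/2}$ and $V$ both are. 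Note $\Phi^{-1} = V^{T} P^{1/2}$.

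For the norm bounds, I would use that $V$ is orthogonal, so $\|V\| = \|V^T\| = 1$, together with submultiplicativity of the spectral norm. Strictly this only gives the inequalities $\|\Phi\| = \|P^{-1/2}V\| \le \|P^{-1/2}\|$ and $\|\Phi^{-1}\| = \|V^{T}P^{1/2}\| \le \|P^{1/2}\|$; to get equality I would note that multiplication by an orthogonal matrix is an isometry in the spectral norm, i.e. $\|P^{-1/2}V\| = \|P^{-1/2}\|$ and $\|V^T P^{1/2}\| = \|P^{1/2}\|$, since the spectral norm is unitarily invariant on both sides. (If one only wants the conclusion that these quantities are uniformly bounded in $N_y$, the inequalities suffice.) Uniform boundedness in $N_y$ is then immediate from the last sentence of Assumption \ref{antar_Qy}.

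I do not anticipate a serious obstacle here; the only point requiring a little care is the distinction between $Q_y$ being symmetric (it need not be) and $PQ_y$ being symmetric (which is what the assumption gives), and correspondingly insisting on the congruence $P^{-1/2}(PQ_y)P^{-1/2}$ rather than a bare similarity. A secondary subtlety, if one wants the stated equalities $\|\Phi\| = \|P^{-1/2}\|$ and $\|\Phi^{-1}\| = \|P^{1/2}\|$ rather than inequalities, is invoking unitary invariance of the spectral norm; this is standard but should be mentioned explicitly.
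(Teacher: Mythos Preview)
Your proposal is correct and follows essentially the same route as the paper: both symmetrize via the similarity $P^{1/2}Q_yP^{-1/2}=P^{-1/2}(PQ_y)P^{-1/2}$, diagonalize this symmetric negative semi--definite matrix by an orthogonal $V$ (the paper's $\hat\Phi$), set $\Phi=P^{-1/2}V$, and then invoke unitary invariance of the spectral norm to obtain $\|\Phi\|=\|P^{-1/2}\|$, $\|\Phi^{-1}\|=\|P^{1/2}\|$. The paper phrases the congruence step as an appeal to Sylvester's law of inertia, but the content is identical to what you wrote.
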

The proof of Lemma \ref{lemma_eig} can be found in Appendix \ref{ProofLemmaEig}. As will be seen later, with such an operator $Q_y$ the error equation for the two dimensional problem can be transformed by a diagonalization technique to a number of one dimensional problems of the same type.

Let $U_h$ be the true solution $U$ projected on the grid, and the point--wise error be $\zeta(t)=U_h(t)-u(t)$. As discussed in the introduction, we will only analyze the effect of the truncation error, denoted $\mathcal{O}(h^p)$, caused by the one--sided stencil at the boundary $x=0$. The error equation is 
\begin{equation}\label{err_2D}
\zeta_{tt}=\frac{1}{h^2}(Q_x\otimes I_y) \zeta+\frac{1}{h^2}(I_x\otimes Q_y) \zeta+h^p T,
\end{equation}
where $h^p T$ is the boundary truncation error and $T$ is independent of $h$. Only the components of $T$ corresponding to the grid points located at $x=ih,\ i=0,\cdots k$ are nonzeros, where $k$ is a small constant independent of $h$. We therefore write 
\begin{equation}\label{Truncation_T}
T=[T_0;T_1;\cdots,T_k;\bold{0};\bold{0};\bold{0};\cdots],
\end{equation}
where $h^pT_i,\ i=0,\cdots k$, is the truncation error at the grid points located at $x=ih$. Here, $T$ is a two dimensional grid function, $T_i$ is a one dimensional grid function in the $y$--direction, and $\bold{0}$ is a zero vector of length $N_y$. 

\subsection{Diagonalization of the error equation} 
To begin with, we consider the case with the dominating truncation error along the entire boundary shown in Figure 1(C), for which  $\|T_i\|_{1D,y}^2=\mathcal{O}(1)$. In Section \ref{sec_localized_error}, we consider the localized case in Figure 1(B) with $\|T_i\|_{1D,y}^2=\mathcal{O}(h)$.

The next step of the normal mode analysis is to perform a Laplace transform in time of \eqref{err_2D}
\begin{equation}\label{err_L_2d}
s^2 \hat \zeta=\frac{1}{h^2}(Q_x\otimes I_y)\hat \zeta+\frac{1}{h^2}(I_x\otimes Q_y)\hat \zeta+h^p\hat T,
\end{equation}
where $s$ is the time dual in the Laplace space. We diagonalize $Q_y$ as in Assumption \ref{antar_Qy} and Lemma \ref{lemma_eig}, and rewrite \eqref{err_L_2d} as
\begin{equation*}
s^2 \hat \zeta=\frac{1}{h^2}(Q_x\otimes I_y)\hat \zeta-(I_x\otimes \Phi\Lambda\Phi^{-1})\hat \zeta+h^p\hat T,
\end{equation*}
where $\Lambda$ is diagonal with diagonal entries $\lambda^{(r)}\geq 0$ in ascending order for $r=1,2,\cdots,N_y$. 
Multiplying the above equation by $(I_x\otimes \Phi^{-1})$ from the left, we obtain
\begin{equation}\label{eqn_phie}
s^2(I_x\otimes\Phi^{-1})\hat \zeta=\frac{1}{h^2}(Q_x\otimes\Phi^{-1})\hat \zeta-(I_x\otimes\Lambda\Phi^{-1})\hat\zeta+h^p(I_x\otimes\Phi^{-1})\hat T.
\end{equation}
With the notation $\hat\epsilon=(I_x\otimes\Phi^{-1})\hat \zeta$, \eqref{eqn_phie} becomes
\begin{equation}\label{eqn_eps}
s^2\hat\epsilon=\frac{1}{h^2}(Q_x\otimes I_y)\hat\epsilon-(I_x\otimes\Lambda)\hat\epsilon+h^p(I_x\otimes\Phi^{-1})\hat T,
\end{equation}
with the operator in the $y$--direction diagonalized. This is the spectrally decomposed form, which consists of $N_y$ scalar difference equations
\begin{equation}\label{eqn_sca}
\underbrace{(\tilde s^2+h^2\lambda^{(r)})}_{(\tilde s_+^{(r)})^2}\hat\epsilon^{(r)}=Q_x\hat\epsilon^{(r)}+h^{p+2}\hat\tau^{(r)},
\end{equation}
where $r=1,2,\cdots,N_y$ and $\tilde s = sh$. For every $r$, we have
\begin{equation*}
\hat\tau^{(r)}=[\hat\tau^{(r)}_0,\hat\tau^{(r)}_1,\cdots,\hat\tau^{(r)}_k,0,0,0,\cdots]^T.
\end{equation*}

Note the close relation between $\hat\tau^{(r)}$  and  $\hat\tau_i=\Phi^{-1}\hat T_i$: the $i^{th}$ entry $\hat\tau^{(r)}_i$ is the same as the $r^{th}$ entry of $\hat\tau_i$. In addition, $\hat\epsilon$ in \eqref{eqn_eps} is related to $\hat\epsilon^{(r)}$ in \eqref{eqn_sca} by
\begin{equation}\label{sum_in_r}
\|\hat\epsilon\|_{2D}^2=h\sum_{r=1}^{N_y} \|\hat\epsilon^{(r)}\|_{1D,x}^2.
\end{equation}

With the notation $(\tilde s^{(r)}_+)^2=\tilde s^2+h^2\lambda^{(r)}$, we have transformed the two dimensional error equation to $N_y$ one dimensional error equations in the Laplace space. This transformation can be understood as a variant of Fourier transform. A general result on the estimate of the error $\zeta$ for the two dimensional problem is stated in the following theorem.

\begin{theorem}\label{MAIN}
If for all $s$ with Re$(s)=\eta>0$, $\hat\epsilon^{(r)}$ in \eqref{eqn_sca} is bounded as
\begin{equation}\label{estimate_r}
\|\hat\epsilon^{(r)}\|_{1D,x}^2\leq \frac{Kh^{2g}}{\eta^m} \sum_{i=0}^{k} \left( |\hat\tau^{(r)}_i|^2+\left|\widehat{\frac{\partial^b\tau^{(r)}_i}{\partial t^b}}\right|^2 \right),
\end{equation}
with $\eta,g,m$ and $b$ independent of $h$, then
\begin{equation}\label{MAIN_estimate}
\int_0^{t_f} \|\zeta\|^2_{2D} dt\leq\frac{K e^{2\eta t_f}h^{2g}}{\eta^{m}}\int_0^{t_f} \chi^2(\Phi) \sum_{i=0}^{k} \left(\|T_i\|_{1D,y}^2+\left\|\frac{\partial^b T_i}{\partial t^b}\right\|_{1D,y}^2 \right)dt.
\end{equation}
Here,  $\chi(\Phi)$ is the condition number of $\Phi$.
\end{theorem}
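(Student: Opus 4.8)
The plan is to reassemble the $N_y$ scalar estimates \eqref{estimate_r} into a single bound on $\|\hat\zeta\|_{2D}$ by summing over the spectral index $r$, then to undo the diagonalization $\hat\epsilon=(I_x\otimes\Phi^{-1})\hat\zeta$, and finally to convert the resulting Laplace-domain inequality back to physical time by Parseval's relation for the Laplace transform, which is where the growth factor $e^{2\eta t_f}$ enters.

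For the first step, fix $s$ with $\mathrm{Re}(s)=\eta>0$, multiply \eqref{estimate_r} by $h$, and sum over $r=1,\dots,N_y$. By \eqref{sum_in_r} the left-hand side becomes exactly $\|\hat\epsilon\|_{2D}^2$. For the right-hand side I use the identification recorded after \eqref{eqn_sca}: $\hat\tau_i^{(r)}$ is the $r$-th entry of $\hat\tau_i=\Phi^{-1}\hat T_i$, so that $h\sum_{r=1}^{N_y}|\hat\tau_i^{(r)}|^2=\|\Phi^{-1}\hat T_i\|_{1D,y}^2\le\|\Phi^{-1}\|^2\|\hat T_i\|_{1D,y}^2$, using that $\|\cdot\|_{1D,y}$ is a scaled Euclidean norm; since $\Phi^{-1}$ is independent of $t$, the same bound holds verbatim with $\partial_t^b$ applied throughout. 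This gives
\[
\|\hat\epsilon\|_{2D}^2\le\frac{Kh^{2g}}{\eta^m}\,\|\Phi^{-1}\|^2\sum_{i=0}^k\Big(\|\hat T_i\|_{1D,y}^2+\big\|\widehat{\partial_t^b T_i}\big\|_{1D,y}^2\Big).
\]

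Next, since $\hat\zeta=(I_x\otimes\Phi)\hat\epsilon$ and the operator norm of $I_x\otimes\Phi$ equals $\|\Phi\|$, we have $\|\hat\zeta\|_{2D}\le\|\Phi\|\,\|\hat\epsilon\|_{2D}$. Combining this with the previous display and writing $\|\Phi\|^2\|\Phi^{-1}\|^2=\chi^2(\Phi)$ yields, for every $s$ on the line $\mathrm{Re}(s)=\eta$,
\[
\|\hat\zeta(s)\|_{2D}^2\le\frac{Kh^{2g}}{\eta^m}\,\chi^2(\Phi)\sum_{i=0}^k\Big(\|\hat T_i\|_{1D,y}^2+\big\|\widehat{\partial_t^b T_i}\big\|_{1D,y}^2\Big),
\]
an estimate that stays meaningful as $h\to0$ because $\chi(\Phi)=\|P^{1/2}\|\|P^{-1/2}\|$ is uniformly bounded by Lemma \ref{lemma_eig}. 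Finally, integrate this over $\mathrm{Im}(s)\in\mathbb{R}$ and apply Parseval's relation for the Laplace transform — valid termwise in the spatial indices, the discrete norms being weighted $\ell^2$ sums — to replace $\frac{1}{2\pi}\int_{\mathbb{R}}\|\hat\zeta(\eta+i\omega)\|_{2D}^2\,d\omega$ by $\int_0^\infty e^{-2\eta t}\|\zeta(t)\|_{2D}^2\,dt$, and similarly on the right. On the left I restrict to $[0,t_f]$ and use $e^{-2\eta t}\ge e^{-2\eta t_f}$ there; on the right I invoke causality of the error equation \eqref{err_2D}, namely that $\zeta(t)$ for $t\le t_f$ depends only on the truncation error on $[0,t]$, so $T$ may be taken to vanish for $t>t_f$, and then bound $e^{-2\eta t}\le1$. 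This produces \eqref{MAIN_estimate}.

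I expect this last step to require the most care: one must verify that Parseval's identity genuinely applies to the grid-function-valued transforms, track the exponential weight so that $e^{2\eta t_f}$ comes out with exactly the stated exponent, and justify trimming the right-hand side to $[0,t_f]$ from the causal, ODE-in-time structure of \eqref{err_2D} rather than from any decay of the truncation error itself. By contrast, the spectral summation and the appearance of the condition number $\chi(\Phi)$ follow immediately from \eqref{sum_in_r} together with the uniform bounds on $\|\Phi\|$ and $\|\Phi^{-1}\|$ supplied by Lemma \ref{lemma_eig}.
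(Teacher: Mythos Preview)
Your argument is correct and follows essentially the same route as the paper's own proof: sum \eqref{estimate_r} in $r$ via \eqref{sum_in_r}, pass between $\hat\epsilon$ and $\hat\zeta$ using $\|\Phi\|$ and $\|\Phi^{-1}\|$ to produce the factor $\chi^2(\Phi)$, and then invoke Parseval together with the weights $e^{-2\eta t}$ to obtain \eqref{MAIN_estimate}. The only cosmetic difference is ordering---you bound by $\|\Phi^{\pm1}\|$ before applying Parseval whereas the paper does so afterwards---and you are in fact a bit more explicit than the paper about why the time integrals can be restricted to $[0,t_f]$ via causality.
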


We note that in practice, to validate \eqref{estimate_r} we do not need to check for every $r$. It suffices to check the corresponding one dimensional problem, that is \eqref{estimate_r} without any shift when  $h^2\lambda^{(r)}=0$. To see this, we need the following lemma describing an important property of $\tilde s^{(r)}_+$.
\begin{lemma}\label{s+}
Let $\tilde s_+=\sqrt{\tilde s^2+\gamma}$ for some real $\gamma\geq 0$. If Re$(\tilde s)\geq\delta\geq 0$, then Re$(\tilde s_+)\geq\delta$.
\end{lemma}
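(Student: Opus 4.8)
The plan is to work in Cartesian coordinates and reduce the claim to an elementary algebraic identity. Write $\tilde s = a + ib$ and $\tilde s_+ = u + iv$ with $a,b,u,v$ real. Since $\tilde s_+$ is defined through the principal branch of the square root (the branch for which $\tilde s_+ = \tilde s$ when $\gamma = 0$), one has $u = \mathrm{Re}(\tilde s_+) \geq 0$ by construction. Hence the case $\delta = 0$ is immediate; and since $a = \mathrm{Re}(\tilde s) \geq \delta \geq 0$, the case $a = 0$ (which forces $\delta = 0$) is also immediate. So it suffices to treat $a > 0$ and to prove the slightly stronger statement $u \geq a$, which then gives $u \geq a \geq \delta$.

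For $a > 0$, I would square the defining relation $\tilde s_+ = \sqrt{\tilde s^2 + \gamma}$ and separate real and imaginary parts, obtaining $u^2 - v^2 = a^2 - b^2 + \gamma$ and $uv = ab$. First observe that $u \neq 0$: if $u = 0$, the second identity forces $b = 0$, and then the first gives $-v^2 = a^2 + \gamma > 0$, a contradiction. With $u > 0$ established, substitute $v = ab/u$ into the first identity; after multiplying through and collecting terms this rearranges to $(u^2 - a^2)\bigl(1 + b^2/u^2\bigr) = \gamma$. Since the second factor is strictly positive and $\gamma \geq 0$, we conclude $u^2 - a^2 \geq 0$, i.e. $u \geq a \geq \delta$, which is the assertion.

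The only point requiring care is the branch of the complex square root, i.e. verifying that the argument is not derailed when $\tilde s^2 + \gamma$ lies on the branch cut $(-\infty,0]$. When $\delta > 0$ this cannot happen in a harmful way: $\tilde s^2 + \gamma$ is real only if $2ab = 0$, hence $b = 0$ (as $a \geq \delta > 0$), and then $\tilde s^2 + \gamma = a^2 + \gamma > 0$. When $\delta = 0$ the value $\tilde s^2 + \gamma$ may indeed be a non-positive real, but then $\mathrm{Re}(\tilde s_+) = 0$, which still satisfies the claimed bound $\mathrm{Re}(\tilde s_+) \geq \delta = 0$. Apart from this bookkeeping about the convention at the cut, the proof is a two-line computation, so I do not expect any genuine obstacle; the main thing to get right is simply the reduction to proving $u \ge a$ and the sign of the factor $1 + b^2/u^2$.
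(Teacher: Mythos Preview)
Your proof is correct and follows essentially the same route as the paper's: write $\tilde s$ and $\tilde s_+$ in Cartesian form, square the defining relation to obtain $u^2-v^2=a^2-b^2+\gamma$ and $uv=ab$, and then eliminate one variable to compare $u$ with $a$. The only cosmetic differences are that the paper eliminates $b$ (using $a>0$) and argues by contradiction, whereas you eliminate $v$ (after first checking $u>0$) and factor directly to $(u^2-a^2)(1+b^2/u^2)=\gamma$; your version is also self-contained for $\delta=0$ and handles the branch-cut bookkeeping explicitly, which the paper defers to a reference.
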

The proof of Lemma \ref{s+} is given in Appendix \ref{2D_analysis}. This lemma implies that each error equation  \eqref{eqn_sca} can be seen as the corresponding one dimensional error equation perturbed  in the direction to the right of the complex plane. Therefore, the estimate of \eqref{eqn_sca} for $r=1,2,\cdots,N_y$ is no worse than the estimate for the corresponding one dimensional problem.

We will show in Section \ref{sec_background} how an estimate of the type \eqref{estimate_r} can be derived by normal mode analysis for a one dimensional problem of the type \eqref{eqn_sca}. In Theorem \ref{MAIN} and the rest of the paper, we use the capital letter $K$ in the estimates to denote some constant independent of the grid spacing $h$, where the precise value of $K$ could be different from one estimate to another. Below we give the proof of Theorem \ref{MAIN}.

\begin{proof}[Proof of Theorem \ref{MAIN}]
A sum of \eqref{estimate_r} in $r$ leads to 
\begin{equation*}
\|\hat\epsilon\|^2_{2D}\leq\frac{Kh^{2g}}{\eta^m} \sum_{i=0}^k \left(\|\tau_i\|_{1D,y}^2+\left\|\widehat{\frac{\partial^b\tau_i}{\partial t^b}}\right\|_{1D,y}^2 \right),
\end{equation*}
and Parseval's relation gives
\begin{equation}\label{estimate_physical_2d}
\int_0^{t_f} e^{-2\eta t}\|\epsilon\|^2_{2D} dt\leq\frac{Kh^{2g}}{\eta^m}\int_0^{t_f} e^{-2\eta t}\sum_{i=0}^k \left(\|\tau_i\|_{1D,y}^2+\left\|\frac{\partial^b\tau_i}{\partial t^b}\right\|_{1D,y}^2 \right)dt,
\end{equation}
where $\eta$ is a constant independent of $h$, $\epsilon=(I_x\otimes\Phi^{-1})\zeta$ and $\tau_i=\Phi^{-1}T_i$. It is obvious by the property of the induced norm that 
\begin{equation}\label{estimate_rhs}
\int_0^{t_f} e^{-2\eta t}\left(\|\tau_i\|_{1D,y}^2+\left\|\frac{\partial^b\tau_i}{\partial t^b}\right\|_{1D,y}^2\right)dt\leq \int_0^{t_f} e^{-2\eta t}\|\Phi^{-1}\|^2 \left(\|T_i\|_{1D,y}^2+\left\|\frac{\partial^b T_i}{\partial t^b}\right\|_{1D,y}^2\right)dt.
\end{equation}
In addition, we also have 
\begin{equation}\label{estimate_lhs}
\begin{split}
\|\Phi\|^2\int_0^{t_f} e^{-2\eta t}\|\epsilon\|^2_{2D} dt &= \int_0^{t_f} e^{-2\eta t}\|\Phi\|^2 \|(I_x\otimes\Phi^{-1})\zeta\|^2_{2D} dt \\
& \geq \int_0^{t_f} e^{-2\eta t} \|\zeta\|^2_{2D} dt.
\end{split}
\end{equation}
Finally, we obtain the following estimate by  \eqref{estimate_physical_2d}, \eqref{estimate_rhs} and \eqref{estimate_lhs},
\begin{equation}\label{estimate_physical_2d_f}
\int_0^{t_f}e^{-2\eta t} \|\zeta\|^2_{2D} dt\leq\frac{Kh^{2g}}{\eta^m}\int_0^{t_f} e^{-2\eta t}\chi^2(\Phi) \sum_{i=0}^k \left(\|T_i\|_{1D,y}^2+\left\|\frac{\partial^b T_i}{\partial t^b}\right\|_{1D,y}^2\right)dt,
\end{equation}
where $\chi(\Phi)=\|\Phi\| \|\Phi^{-1}\|$ is the condition number of $\Phi$, and by Lemma \ref{lemma_eig} it is uniformly bounded. The estimate \eqref{MAIN_estimate} is obtained by multiplying \eqref{estimate_physical_2d_f} with $e^{2\eta t_f}$ on both sides, by using $e^{-2\eta t}<1$ and $e^{2\eta (t_f-t)}>1$ for $0<t<t_f$.
\end{proof}

Note that by Lemma \ref{lemma_eig}, $\chi(\Phi)$ is uniformly bounded. Theorem \ref{MAIN} indicates that the convergence rate of a two dimensional problem is at least equal to that of the corresponding one dimensional problem. A special case for a two dimensional problem is when the dominating truncation error is localized  also in the $y$--direction, i.e. the number of grid points with the dominating truncation error is independent of $h$, yielding $\| T_i\|_{1D,y}^2=\mathcal{O}(h), i=0,\cdots,k$, in \eqref{MAIN_estimate}. Straightforwardly, this leads to an additional half order gain. A more detailed analysis in Section \ref{sec_localized_error} shows that the gain for a two dimensional problem with a localized truncation error could be a full order higher compared with the gain for a corresponding one dimensional problem.

\subsection{Background: accuracy analysis for one dimensional problems}\label{sec_background}
We would like to explore the relation between the one dimensional error equation \eqref{eqn_sca} and the corresponding estimate \eqref{estimate_r}. Therefore, accuracy analysis for the corresponding one dimensional problem is needed, and is presented in this section. Part of the analysis can also be found in \cite{Wang2016}, where it is argued that  the error estimate in Laplace space is determined by its behavior at the origin. In the paper, we give a proof of that.

We consider the one dimensional wave equation 
\begin{equation}\label{eqn_1d}
U_{tt}=U_{xx},\ 0\leq x<\infty,\ t\geq 0,
\end{equation} 
with appropriate initial and boundary conditions so that the problem is well--posed and the true solution is in L$_2$ and smooth. Similar to the two dimensional case, \eqref{eqn_1d} is discretized by a finite difference operator and the boundary conditions are imposed so that the semi--discretization is energy stable. The semi--discretization is 
\begin{equation}\label{1d_semi}
u_{tt}=\frac{Q}{h^2}u,
\end{equation}
 and the error equation in the Laplace space is
\begin{equation}\label{err_L_1d}
\tilde s^2\hat \zeta=Q\hat \zeta+h^{p+2}\hat T,
\end{equation}
where $s$ is the time dual in Laplace space and $\tilde s=sh$. Here $u$ and $\hat \zeta$ are vectors containing the numerical solution and point--wise error in the Laplace space, respectively. The operator $Q/h^2$ approximates the second derivative in space including an implementation of the boundary condition. The vector $\hat T$ has only a few non--zero components corresponding to the large truncation error close to the boundary $x=0$. Since the true solution is smooth, the non--zero components of $\hat T$  are to the leading order on the form
\begin{equation}\label{Tform}
\hat T_i=a_i\frac{\partial^{p+2}}{\partial x^{p+2}}\hat U(0,s),\ i=0,\cdots,k,
\end{equation}
where $a_i$ can be obtained by the Taylor expansion. The constants $a_i$ and $k$ are determined by the precise form of $Q$, and are independent of $h$.
\begin{remark}
Equation \eqref{err_L_1d} is in the same form as \eqref{eqn_sca}, with $\tilde s^2$ in the left--hand side of \eqref{err_L_1d} and a perturbed $(\tilde s_+^{(r)})^2$ in the left--hand side of \eqref{eqn_sca}. We keep this in mind in the following analysis. 
\end{remark}

The difference equation \eqref{err_L_1d} can be solved by first considering the components corresponding to the grid points away from the boundary where the forcing $h^{p+2}\hat T$ is zero and $Q/h^2$ is the standard central finite difference stencil. This step gives the characteristic equation with roots being functions of $\tilde s$. Since the problem under consideration is a half--line problem, in the estimate we only need to include the admissible roots $\kappa(\tilde s)$, for which $|\kappa(\tilde s)|<1$ for all Re$(\tilde s)>0$. With a $2l^{th}$ order central finite difference stencil, there are $l$ admissible roots $\kappa_1,\cdots,\kappa_l$. 

The next step in solving  \eqref{err_L_1d} is to consider its first few rows, corresponding to the discretization close to the boundary, where $h^{p+2}\hat T$ does not vanish. The corresponding equations can be written in a matrix--vector multiplication form
\begin{equation}\label{BS}
C(\tilde s)\Sigma=h^{p+2}\hat T_C,
\end{equation}
which is referred to as the boundary system. The matrix $C(\tilde s)$ is called the determinant matrix and depends on the boundary stencil, and is often of small size. For example, for the second order discretization in the SBP--SAT framework, $C(\tilde s)$ is a 3--by--3 matrix with the Dirichlet boundary condition, and a scalar with the Neumann boundary condition. For the precise form of these boundary systems, see equation (25) and (37) in \cite{Wang2016}, respectively. For higher order discretizations, the dimension of $C(\tilde s)$ is in general $k+1$. The first $k+1$ components of $\hat T_C$ is taken from $T_i,i=0,\cdots,k$ in \eqref{Tform}, with zeros appending to the end if needed. The unknown vector $\Sigma$ is
\begin{equation}\label{Sigma}
\Sigma=[\hat \zeta_1,\hat \zeta_2,\cdots, \hat \zeta_d, \sigma_1,\sigma_2,\cdots,\sigma_l]^T,
\end{equation}
where $d$ depends on the numerical boundary treatment. 

We can express the general solution of the difference equation \eqref{err_L_1d} by using  \eqref{Sigma}  as
\begin{equation}\label{sol_diff_1d}
\hat \zeta=[\hat \zeta_1,\hat \zeta_2,\cdots, \hat \zeta_d,\sum_{j=1}^l \sigma_j,\sum_{j=1}^l \sigma_j\kappa_j,\sum_{j=1}^l \sigma_j\kappa_j^2,\cdots]^T.
\end{equation}
Then the L$_2$ norm of $\hat \zeta$ is
\begin{equation}\label{L2_e_hat}
\begin{split}
\| \hat \zeta  \|^2_{1D,x} &=  h \sum_{i=1}^d |\hat \zeta_i|^2  +  h \sum_{j=1}^l |\sigma_j|^2 \sum_{k=0}^{\infty} |\kappa_j|^{2k} \\
&= h \sum_{i=1}^d |\hat \zeta_i |^2 + h  \sum_{j=1}^l  |\sigma_j|^2 \frac{1}{1- |\kappa_j|^{2}}.  
\end{split}
\end{equation}

There are two steps to derive an estimate of \eqref{L2_e_hat}. One step is to estimate $1/(1-|\kappa_j|^2)$, and is derived in Section \ref{sec-kappa}. In addition, we also need to estimate the other terms in \eqref{L2_e_hat}, i.e. the components of $\Sigma$:
\begin{equation}\label{2l_sol}
|\hat \zeta_i|,\ i=1,2,\cdots,d \text{ and } |\sigma_j|,\ j=1,2,\cdots,l.
\end{equation}
As will be seen later, when $\tilde s=0$ the characteristic equation always has an admissible root equal to 1, denoted by $\kappa_1=1$. Therefore, we consider Re$(\tilde s)=\eta h=\mathcal{O}(h)$ in the limit as $h$ approaches zero. The solution to \eqref{BS} can be written as
\begin{equation}\label{SigmaE}
\Sigma=h^{p+2}C^{-1}(\tilde s)\hat T_C, \text{ Re}(\tilde s)>0,
\end{equation} 
because of the following lemma.
\begin{lemma}\label{lemma_stability}
If the discretization \eqref{1d_semi} is stable, then $C(\tilde s)$ in the boundary system \eqref{BS} is non--singular for all Re$(\tilde s)>0$. 
\end{lemma}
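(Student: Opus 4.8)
The plan is to relate non-singularity of $C(\tilde s)$ to the absence of nontrivial solutions of the homogeneous boundary system, and then to connect such a nontrivial solution to an eigenvalue of the semi-discrete operator $Q/h^2$ lying in $\mathrm{Re}(s)>0$, contradicting stability. First I would suppose, for contradiction, that $C(\tilde s_0)$ is singular for some $\tilde s_0$ with $\mathrm{Re}(\tilde s_0)>0$. Then the homogeneous boundary system $C(\tilde s_0)\Sigma=0$ has a nontrivial solution $\Sigma_0=[\hat\zeta_1,\dots,\hat\zeta_d,\sigma_1,\dots,\sigma_l]^T\neq 0$. Using the representation \eqref{sol_diff_1d}, this $\Sigma_0$ assembles into a grid function $\hat\zeta\neq 0$ that satisfies the homogeneous difference equation $\tilde s_0^2\hat\zeta=Q\hat\zeta$ in the interior rows (by construction of the characteristic equation and the admissible roots $\kappa_j$ with $|\kappa_j|<1$) and in the boundary rows (because $C(\tilde s_0)\Sigma_0=0$). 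Since $|\kappa_j|<1$ for $\mathrm{Re}(\tilde s)>0$, the tail $\sum_j\sigma_j\kappa_j^{\,i}$ is square-summable, so $\hat\zeta\in\ell_2$; hence with $s_0=\tilde s_0/h$ we have $Q\hat\zeta/h^2=s_0^2\hat\zeta$, i.e. $s_0^2$ is an $\ell_2$-eigenvalue of $Q/h^2$ with eigenfunction $\hat\zeta$.

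The second step is to convert this into a statement that contradicts stability of \eqref{1d_semi}. Writing the second-order system \eqref{1d_semi} as a first-order system in $(u,u_t)$, an $\ell_2$-eigenfunction $\hat\zeta$ of $Q/h^2$ with eigenvalue $s_0^2$ produces the mode $e^{s_0 t}[\hat\zeta,\, s_0\hat\zeta]^T$, which is an $\ell_2$ solution of the semi-discrete problem growing like $e^{\mathrm{Re}(s_0)t}$ with $\mathrm{Re}(s_0)>0$. Because this holds on the semi-infinite grid with an $h$ that does not scale away the growth rate — indeed $\mathrm{Re}(s_0)=\mathrm{Re}(\tilde s_0)/h$ — such a solution violates any uniform-in-time bound of the solution by the data, i.e. it contradicts generalized stability (and a fortiori energy stability) of the discretization. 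This is essentially the standard Kreiss-type argument: stability forbids eigenvalues of the resolvent in the open right half-plane, and the determinant matrix $C(\tilde s)$ is precisely the object whose vanishing detects such eigenvalues for the half-line problem.

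The main obstacle I anticipate is bookkeeping at the boundary rather than anything deep: one must verify carefully that a nontrivial kernel vector $\Sigma_0$ of $C(\tilde s_0)$ really does extend, via \eqref{sol_diff_1d}, to a genuinely nonzero $\ell_2$ grid function solving the full homogeneous equation — in particular that the interior rows are automatically satisfied by the choice of admissible roots, and that $\Sigma_0\neq 0$ forces $\hat\zeta\neq 0$ (this uses that the map from $\Sigma$ to $\hat\zeta$ is injective, which follows from the $\kappa_j$ being distinct or, more carefully, from the structure of the fundamental solutions of the recursion). A secondary point is to state precisely which notion of stability is assumed and to cite the relevant characterization (e.g. \cite[Chapter 12]{Gustafsson2013}) so that ``eigenvalue in $\mathrm{Re}(s)>0$'' is known to be incompatible with it; once that link is in place, the contradiction is immediate and the lemma follows. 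A short remark could note that the converse implication — that non-singularity of $C(\tilde s)$ for all $\mathrm{Re}(\tilde s)>0$ together with the determinant/Kreiss condition would give stability — is not needed here, so only the stated one-directional claim is proved.
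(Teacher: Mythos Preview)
Your proposal is correct and follows essentially the same route as the paper: assume $C(\tilde s_0)$ is singular, build from the nontrivial kernel vector an $\ell_2$ grid function solving the homogeneous eigenvalue problem $s_0^2\hat\zeta=Q\hat\zeta/h^2$, and conclude that the resulting exponentially growing mode contradicts stability. The only place the paper is more explicit is in the final step, where it spells out the grid--refinement argument (on $h_n=h/n$ the same $\tilde s_0$ yields modes $e^{ns_0 t}$ with unbounded growth rates) rather than leaving it implicit in your observation $\mathrm{Re}(s_0)=\mathrm{Re}(\tilde s_0)/h$; you may want to make that scaling argument explicit and cite \cite[Lemma 12.1.1]{Gustafsson2013} for the precise link to stability.
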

The proof of Lemma \ref{lemma_stability} is very similar to the proof of Lemma 12.1.1 in \cite[pp.~378]{Gustafsson2013}, but for completeness we include it in Appendix \ref{app_stability}.

In the following section, we derive estimates for each component of $\Sigma$, denoted by $\Sigma_i$, where $i=1,\cdots,l+d$. For simplified notation, we use the maximum norm of vectors
\begin{equation}\label{Sigma_max}
|\Sigma_i| \leq \|\Sigma\|_{\max} \leq h^{p+2} \|C^{-1}(\tilde s) \hat T_C\|_{\max}  \leq h^{p+2} \|C^{-1}(\tilde s)\|_{\max} \|\hat T_C\|_{\max},
\end{equation}
where $\|\Sigma\|_{\max} = \max_i |\Sigma_i|$. We emphasize again that the dimension of the vector $\Sigma$ in \eqref{SigmaE} is finite and is independent of the grid spacing. In certain cases, we do not use the last inequality in \eqref{Sigma_max} because some components of $\Sigma$ may vanish and computing $C^{-1}(\tilde s) \hat T_C$ directly gives a sharper estimate.

\subsubsection{Estimates of $|\Sigma_i|$}\label{sec-Sigma}
When $\tilde s$ is away from the imaginary axis, $C(\tilde s)$ is non--singular by the energy stability. In many common stable semi--discretizations of the second order wave equation, the determinant matrix $C(\tilde s)$ is singular for some $\tilde s$ on the imaginary axis. We will show below that singularities away from the origin typically have no influence on the order of accuracy, and can be handled separately. A singularity at $\tilde s=0$ on the other hand can introduce a loss of accuracy compared with the optimal gain. The  following lemma makes the arguments precise.

\begin{lemma}\label{antar_C2}
Consider the boundary system \eqref{BS} at $\tilde s=i\tilde\xi+\eta h$ where $\eta>0$ is a constant independent of $h$. Let $\delta$ be a small but $h$--independent constant. 
\begin{itemize}
\item When $\tilde s$ is in the vicinity of the origin, i.e. $|\tilde s|\leq \delta$, there exists a non--negative integer $w$ such that 
\begin{equation}\label{sigma_w}
|\Sigma_i|\leq h^{p+2-w}\frac{K}{\eta^w}\|\hat T_C\|_{\max}.
\end{equation}
If $C(0)$ is nonsingular, then $w=0$. Otherwise, $w$ can be determined by the precise form of the boundary system \eqref{BS}. 
\item When $\tilde s$ is away from the origin, i.e. $|\tilde s|>\delta$, there exists a non--negative integer $\alpha$ such that 
\begin{equation}\label{sigma_alpha}
|\Sigma_i|\leq h^{p+2}\frac{K}{(\eta\delta)^\alpha}\left\|\widehat{\frac{\partial^\alpha T_C}{\partial t^\alpha}}\right\|_{\max}.
\end{equation}
The value of $\alpha$ can be determined by the precise form of the boundary system \eqref{BS}. 
\end{itemize}
\end{lemma}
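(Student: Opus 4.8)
The plan is to analyze the two regimes of $\tilde s$ separately, exactly as the lemma's two bullets indicate, starting from the representation $\Sigma = h^{p+2} C^{-1}(\tilde s)\hat T_C$ from \eqref{SigmaE} (valid since $\text{Re}(\tilde s) = \eta h > 0$ by Lemma \ref{lemma_stability}) together with the componentwise bound \eqref{Sigma_max}. The whole argument reduces to understanding how $\|C^{-1}(\tilde s)\|_{\max}$ behaves as $\tilde s$ ranges over the contour $\tilde s = i\tilde\xi + \eta h$. I would first record that $C(\tilde s)$ is an analytic (indeed entire, or at worst algebraic in $\kappa_j(\tilde s)$) matrix-valued function of $\tilde s$ of fixed finite size independent of $h$, and that by Lemma \ref{lemma_stability} $\det C(\tilde s) \neq 0$ whenever $\text{Re}(\tilde s) > 0$; the only possible zeros on the closed right half-plane lie on the imaginary axis.

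For the second bullet ($|\tilde s| > \delta$): the set $\{\tilde s : \text{Re}(\tilde s) \geq 0,\ |\tilde s| \geq \delta\}$ intersected with a bounded region contains only finitely many zeros of $\det C$, all on the imaginary axis (and, by a growth/periodicity argument in $\kappa$, the behaviour for large $|\tilde s|$ is benign, so one may restrict to a compact arc). Near each such zero $\tilde s_0 = i\tilde\xi_0$, write $\det C(\tilde s) = (\tilde s - \tilde s_0)^{\alpha_0} g(\tilde s)$ with $g(\tilde s_0)\neq 0$; then $\|C^{-1}(\tilde s)\|_{\max} \leq K / |\tilde s - \tilde s_0|^{\alpha_0}$ near $\tilde s_0$, and on the contour $|\tilde s - \tilde s_0| \geq \text{Im-distance or} \geq \eta h$, hmm — actually the distance from $i\tilde\xi + \eta h$ to $i\tilde\xi_0$ is at least $\eta h$, which would give a negative power of $h$. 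The standard fix, and the reason the derivative term $\widehat{\partial^\alpha T_C/\partial t^\alpha}$ appears, is to \emph{not} bound $|\tilde s - \tilde s_0|$ from below but instead to absorb the singularity into the data: $\widehat{\partial^\alpha \tau /\partial t^\alpha} = \tilde s^\alpha \hat\tau / h^\alpha$ in the scaled variable, so multiplying and dividing by $(\tilde s - \tilde s_0)^{\alpha_0}$ and using $(\tilde s-\tilde s_0)^{\alpha_0} C^{-1}$ bounded, we trade a factor $|\tilde s - \tilde s_0|^{-\alpha_0}$ against a factor of the Laplace-transformed time derivative of $\hat T_C$, picking up $1/(\eta\delta)^\alpha$ where $\alpha = \max_{\tilde s_0}\alpha_0$ is the maximal pole order on the compact arc and $\delta$ lower-bounds $|\tilde s_0|$. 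Taking $\alpha$ as this maximum and using a partition-of-unity / finite-cover argument over the finitely many singularities yields \eqref{sigma_alpha}.

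For the first bullet ($|\tilde s| \leq \delta$): here $\tilde s = 0$ may itself be a zero of $\det C$. Expand $C(\tilde s)$ in powers of $\tilde s$ (this is where the admissible root $\kappa_1 = 1$ at $\tilde s = 0$ enters, since that coalescence is the mechanism by which $C$ degenerates); by the Smith–McMillan / local analytic factorization of $C^{-1}$, or more concretely by Cramer's rule — $C^{-1} = \text{adj}(C)/\det C$ — the entries of $C^{-1}(\tilde s)$ have a pole of some finite order $w$ at $\tilde s = 0$, i.e. $\|C^{-1}(\tilde s)\|_{\max} \leq K/|\tilde s|^w$ for $|\tilde s| \leq \delta$. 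On the contour $|\tilde s| = |i\tilde\xi + \eta h| \geq \eta h$, so $\|C^{-1}\|_{\max} \leq K/(\eta h)^w$, and feeding this into \eqref{Sigma_max} gives $|\Sigma_i| \leq h^{p+2} \cdot K/(\eta h)^w \cdot \|\hat T_C\|_{\max} = h^{p+2-w} K/\eta^w \|\hat T_C\|_{\max}$, which is \eqref{sigma_w}. If $C(0)$ is nonsingular then $C^{-1}$ is bounded near $0$, $w = 0$, and there is no loss. The integer $w$ is read off from the order of vanishing of $\det C$ at $0$ versus the order of vanishing of the relevant minors (the $a_i$-weighted combination appearing through $\hat T_C$), which depends on the specific scheme; I would phrase the lemma's conclusion to allow $w$ to be scheme-dependent rather than computing it here.

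The main obstacle I anticipate is the bookkeeping in the second bullet: making rigorous that only finitely many singularities of $\det C$ occur on the relevant part of the imaginary axis and that each has \emph{finite} order — this requires knowing $C(\tilde s)$ is not identically singular (which is exactly Lemma \ref{lemma_stability}, since $\det C \not\equiv 0$ as it is nonzero off the axis) and a normality/analyticity argument for the roots $\kappa_j(\tilde s)$ near branch points on the axis, plus controlling the contribution for large $|\tilde\xi|$. The mechanism of converting a pole in $\tilde s$ near $i\tilde\xi_0$ into a time-derivative of the data — rather than a crude distance bound — is the conceptually essential step and the one most worth stating carefully; everything else is Cramer's rule plus the observation $|i\tilde\xi + \eta h| \geq \eta h$.
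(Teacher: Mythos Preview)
Your proposal is essentially correct and follows the same two--regime strategy as the paper: bound $\|C^{-1}(\tilde s)\|_{\max}$ separately for $|\tilde s|\leq\delta$ and $|\tilde s|>\delta$, and in the latter regime convert the factor $1/|s|^\alpha$ into a time derivative of the data via $\widehat{\partial^\alpha T_C/\partial t^\alpha}=s^\alpha\hat T_C$. Two points are worth sharpening by comparison with the paper.

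First, near the origin the paper does not rely on Cramer's rule but invokes a concrete SVD--based perturbation lemma (Lemma~3.4 in \cite{Nissen2012}): with $C(0)=\mathcal{U}\mathcal{S}\mathcal{V}^*$ and $C(\tilde s)=C(0)+\tilde s C'(0)+\cdots$, the value of $w$ is determined by the first $m$ for which $(\mathcal{U}^*C^{(m)}(0)\mathcal{V})_{nn}\neq 0$, and moreover if $\hat T_C$ lies in the column space of $C(0)$ one gets $w=0$ even though $C(0)$ is singular. Your Cramer/Smith--McMillan remark (``order of vanishing of $\det C$ versus the relevant minors through $\hat T_C$'') is the same phenomenon, but the SVD formulation is what is actually used in the companion analyses to compute $w$ scheme by scheme, so it is worth stating explicitly.

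Second, away from the origin the paper notes that $C(\tilde s)$ depends on the admissible roots $\kappa_j(\tilde s)$, which may have branch points on the imaginary axis; the local expansion of $C^{-1}$ is then a Puiseux series, and $\alpha$ is the leading exponent rounded \emph{up} to an integer. You flagged this as an anticipated obstacle; the paper's treatment is simply to allow the non--integer exponent and absorb the rounding into the constant. Your phrasing of the trade---multiplying and dividing by $(\tilde s-\tilde s_0)^{\alpha_0}$---is slightly misleading: the time derivative comes from the factor $s^{-\alpha}=(h/\tilde s)^\alpha$ in $\hat T_C$, while the bound $|\tilde s-\tilde s_0|\geq\eta h$ on the contour is \emph{also} used, on $\|C^{-1}\|_{\max}$; the two $h^\alpha$ factors cancel to give the $1/(\eta\delta)^\alpha$ you state. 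Both ingredients are needed, not one in place of the other.
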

The proof of Lemma \ref{antar_C2} is found in Appendix \ref{proof_C2}.

If the boundary system \eqref{BS} is singular at the origin, then we need to use \eqref{sigma_w} to estimate $|\Sigma_i|$ leading to a $w^{th}$ order loss. If the singularity occurs on the imaginary axis away from the origin, \eqref{sigma_alpha} introduces no accuracy order loss, but temporal derivatives of the truncation error appear in the estimate of $|\Sigma_i|$ .  As an example, with Neumann boundary conditions  we have $w=1$ for a second order method and $w=0$ for a fourth order method, see \cite{Wang2016}.

\subsubsection{Estimates of $1/(1-|\kappa_j|^2)$}\label{sec-kappa}
The characteristic equation and its solution only depend on the finite difference stencil in the interior of the computational domain. To discretize the wave equation \eqref{eqn_1d}, it is common to use the standard central finite difference stencil as the interior stencil. In this case, we have the following lemma for the roots.

\begin{lemma}\label{antar_kappa}
Consider the characteristic equation corresponding to the difference equation $\tilde s^2\zeta=\tilde Q\zeta$, where $\tilde Q$ is a standard central difference operator approximating the second derivative. Let $\tilde s=i\tilde\xi +\eta h$ with a constant $\eta>0$ independent of $h$. The roots of the characteristic equation have the following properties. 
\begin{itemize}
\item When $\tilde s$ is in the vicinity of the origin, there is one admissible root $\kappa_1(\tilde s)$ to the characteristic equation satisfying 
\begin{equation}\label{d}
\frac{1}{1-|\kappa_1(\tilde s)|^2}\leq\frac{K}{\eta h}.
\end{equation}
The constant $K$ depends on the finite difference stencil, but not on $h$ or $\eta$. For the other admissible roots, $\frac{1}{1-|\kappa(\tilde s)|^2}$ is bounded independently of $h$. 
\item When $\tilde s$ is away from the origin, there can be admissible roots satisfying 
\begin{equation}\label{kappa_beta}
\frac{1}{1-|\kappa_j(\tilde s)|^2}\leq\frac{K}{(\eta h)^\beta}.
\end{equation}
for some $\beta>0$. Such roots do not introduce any accuracy order loss when estimating \eqref{L2_e_hat}. 
\end{itemize}
\end{lemma}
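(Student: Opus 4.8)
## Proof Proposal for Lemma \ref{antar_kappa}

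The plan is to derive the characteristic equation explicitly, identify its roots as functions of $\tilde s$, and then track how the admissible roots behave near $\tilde s = 0$ and away from it. For the standard central difference approximation of the second derivative, the interior rows of $\tilde s^2 \zeta = \tilde Q \zeta$ yield a constant--coefficient linear recurrence; substituting the ansatz $\zeta_j = \kappa^j$ produces a polynomial (the characteristic equation) in $\kappa$ whose coefficients are polynomials in $\tilde s^2$. For the classical second order stencil the characteristic equation is $\kappa^2 - (2 + \tilde s^2)\kappa + 1 = 0$; for higher order $2l$--stencils it is a degree--$2l$ polynomial that is palindromic, so roots come in reciprocal pairs $\kappa, 1/\kappa$, and exactly $l$ of them are admissible (inside the unit disk) for $\mathrm{Re}(\tilde s) > 0$. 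I would first record this structure and note the key special value: at $\tilde s = 0$ the equation becomes $(\kappa - 1)^{2l} = 0$ in the sense that $\kappa = 1$ is a root (a double root of the full palindromic polynomial counting the reciprocal partner), so one admissible root $\kappa_1(\tilde s) \to 1$ as $\tilde s \to 0$, while the remaining admissible roots tend to limits strictly inside the unit disk.

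Next I would prove the bound \eqref{d}. Setting $\tilde s = i\tilde\xi + \eta h$ with $\tilde\xi$ small and $\eta h$ small, I Taylor--expand $\kappa_1(\tilde s)$ around $\tilde s = 0$. Since $\kappa_1(0) = 1$ and, from the second order equation, $\kappa_1 = 1 + \tilde s + O(\tilde s^2)$ (more generally $\kappa_1 = 1 + c\,\tilde s + O(\tilde s^2)$ with $c > 0$ a stencil--dependent constant — for the $2l$ stencil this $c$ equals $1$ because these schemes are consistent approximations of $U_{xx}$), I get $|\kappa_1(\tilde s)|^2 = 1 + 2c\,\mathrm{Re}(\tilde s) + O(|\tilde s|^2) = 1 + 2c\,\eta h + O(|\tilde s|^2)$. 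Hence $1 - |\kappa_1(\tilde s)|^2 \geq c\,\eta h$ for $h$ and $\tilde\xi$ small enough (the $O(|\tilde s|^2)$ terms, which include the $\tilde\xi^2$ contribution, are dominated once we restrict to the stated vicinity of the origin — here I would be slightly careful that the imaginary perturbation $\tilde\xi$ does not destroy the bound; since the scheme is stable, $|\kappa_1| < 1$ strictly for $\mathrm{Re}(\tilde s) > 0$, and the leading real part of $1 - |\kappa_1|^2$ is $+2c\eta h$, so the estimate $1/(1-|\kappa_1|^2) \le K/(\eta h)$ follows). For the other admissible roots $\kappa_j$, $j \ge 2$, continuity and $|\kappa_j(0)| < 1$ give $|\kappa_j(\tilde s)| \le \rho < 1$ uniformly in a neighborhood, so $1/(1-|\kappa_j|^2) \le 1/(1-\rho^2) = O(1)$.

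For the second bullet, $\tilde s$ away from the origin, I would argue that by stability $|\kappa_j(\tilde s)| < 1$ for all $\mathrm{Re}(\tilde s) > 0$, and on the compact set $\{|\tilde s| \ge \delta,\ 0 \le \mathrm{Re}(\tilde s) \le \eta h \le \text{const}\}$ the only way $1 - |\kappa_j|^2$ can be small is near points of the imaginary axis where $|\kappa_j| \to 1$ (the generalized eigenvalue points). Near such a point $i\tilde\xi_0 \neq 0$, I expand $\kappa_j$ and find $1 - |\kappa_j(\tilde s)|^2 \gtrsim \eta h$ generically, or $\gtrsim (\eta h)^\beta$ if the root approaches the unit circle tangentially to some order; the exponent $\beta$ is read off from the local Puiseux expansion of $\kappa_j$ at that point. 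The important structural point — which I would state and justify by a scaling/change--of--variables argument paralleling the treatment of \eqref{sigma_alpha} in Lemma \ref{antar_C2} — is that the factor $(\eta h)^{-\beta}$ is compensated when forming \eqref{L2_e_hat}, because the associated coefficient $\sigma_j$ in \eqref{sol_diff_1d} carries matching powers of $\tilde s - i\tilde\xi_0$ (equivalently, one trades powers of $h$ for temporal derivatives of the truncation error), so no net order loss occurs.

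The main obstacle I anticipate is the near--origin analysis in the first bullet for general $2l$--order stencils: one must show that among the $l$ admissible roots exactly one degenerates to $1$ as $\tilde s \to 0$ while the rest stay strictly inside the disk, and that the degenerate root's expansion has the form $\kappa_1 = 1 + \tilde s + O(\tilde s^2)$ with the \emph{correct sign} so that $\mathrm{Re}(\tilde s) > 0$ pushes $|\kappa_1|$ below $1$ at linear rate. This requires understanding the factorization of the characteristic polynomial at $\tilde s = 0$ — namely that $\tilde s = 0$ corresponds to the consistent approximation of $U_{xx} = 0$, whose indicial roots are a double root at $\kappa = 1$ (from the repeated eigenvalue of the discrete Laplacian's symbol) and $2l - 2$ roots off the circle — together with a perturbation argument (implicit function theorem on the resolved branches, or a Newton--polygon computation) to get the linear term. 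The rest is bookkeeping with Taylor expansions.
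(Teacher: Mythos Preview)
Your outline follows the same route as the paper---identify the root structure of the characteristic polynomial at $\tilde s=0$, then perturb---but there are two concrete errors and one gap you should fix.

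First, the claim that at $\tilde s=0$ the characteristic equation reduces to $(\kappa-1)^{2l}=0$ is false for $l\geq 2$; e.g.\ for the fourth order stencil it factors as $(\kappa-1)^2(\kappa^2-14\kappa+1)$, with the extra roots $7\pm4\sqrt{3}$ off the unit circle. You do state the correct picture in your last paragraph (double root at $\kappa=1$, the other $2l-2$ roots off the circle), but that is exactly the fact requiring proof. The paper establishes it by passing to the Fourier symbol: writing the characteristic equation as $f(l,\omega h)=\tilde s^2$ with
\[
f(l,\omega h)=-\sum_{n=0}^{l-1}\frac{2(n!)^2}{(2n+2)!}\Bigl(4\sin^2\tfrac{\omega h}{2}\Bigr)^{n+1},
\]
and noting $f$ is strictly monotone in its argument, so $f=0$ forces $\omega=0$, i.e.\ $\kappa=1$ is the only unit--modulus root and it is double. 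This Fourier--symbol argument is the cleanest way to handle all $l$ at once.

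Second, the admissible branch has $\kappa_1=1-\tilde s+O(\tilde s^2)$, not $c>0$; with your sign $|\kappa_1|$ would increase when $\mathrm{Re}(\tilde s)>0$. More importantly, the linear expansion $|\kappa_1|^2=1+2c\,\mathrm{Re}(\tilde s)+O(|\tilde s|^2)$ does \emph{not} yield \eqref{d}: the remainder contains a $\tilde\xi^2$ term, and in the vicinity $|\tilde s|\leq\delta$ one has $\tilde\xi^2=O(\delta^2)$, which is generically much larger than $\eta h$. Your appeal to stability gives only $1-|\kappa_1|^2>0$, not the quantitative lower bound. The paper instead writes $\kappa_\nu=1+\nu$ with the exact relation $\nu^2=\tilde s^2\kappa_\nu^{\,l}/P(\kappa_\nu)$ coming from the factorization $(\kappa-1)^2P(\kappa)=\tilde s^2\kappa^l$, and from this derives separate lower bounds $|\mathrm{Re}(\nu)|\geq K_1\eta h$ and $|\mathrm{Im}(\nu)|\geq K_2|\tilde\xi|$ before computing $1-|1+\nu|^2=-2\mathrm{Re}(\nu)-|\nu|^2$. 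Your treatment of the second bullet (roots approaching the circle away from the origin, handled by Puiseux expansion and trading $h$--powers for time derivatives) matches the paper.
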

The proof of Lemma \ref{antar_kappa} can be found in Appendix \ref{proof_kappa}. When $\tilde s$ is in a vicinity of the origin, the bound on $1/(1-|\kappa_1(\tilde s)|^2)$ in \eqref{d} leads to a $h^{-1}$ factor in \eqref{L2_e_hat}. When $\tilde s$ is away from the origin, $1/(1-|\kappa_j(\tilde s)|^2)$ does not introduce any $h$--dependent factor in \eqref{L2_e_hat}. Together with Lemma \ref{antar_C2}, we therefore conclude that the order of accuracy is determined by the error in the vicinity of the origin in Laplace space. 

\subsubsection{Final estimates in the Laplace space}
The final error estimate is determined by the combined effect of the estimates \eqref{sigma_w} and \eqref{d} from Lemma \ref{antar_C2} and  \ref{antar_kappa}, respectively. In the worst scenario, we have 
\begin{equation}\label{sigma_kappa}
|\sigma_1|\sim h^{p+2-w} \text{ and } 1/(1-|\kappa_1|^2)\sim h^{-1},
\end{equation}
which leads to 
\begin{equation}\label{estimate_Laplace}
\|\hat \zeta\|_{2D}^2\leq \frac{Kh^{2p+4-2w}}{\eta^{1+2w+b}}\left(\|\hat T_C\|_{\max}^2+\left\|\widehat{\frac{\partial^b T_C}{\partial t^b}}\right\|_{\max}^2\right),
\end{equation}
where the constant $K$ is independent of $h$. A more detailed analysis might reveal that $\sigma_1/(1-|\kappa_1|^2)$ satisfies a sharper bound than given in \eqref{sigma_kappa}. The value $b=2\alpha+\beta$ in \eqref{estimate_Laplace} is given by $\alpha$ in \eqref{sigma_alpha} and $\beta$ in \eqref{kappa_beta}. Comparing with the estimate \eqref{estimate_r} in Theorem \ref{MAIN} we have $2g=2p+4-2w$ and $m=1+2w+b$. 

\subsection{Accuracy analysis for two dimensional problems with corner truncation errors}\label{sec_localized_error}
We now consider a two dimensional problem where the large truncation error along a boundary is only localized at a few grid points at a corner, see Figure \ref{fig:grid}(B).

Without loss of generality we consider only non-zero elements in $T_0$, i.e. the truncation error has the form \eqref{Truncation_T} with $k=0$ and $T_0 = [\mathcal{O}(1), 0, \cdots]^T$. An immediate consequence is that $\|T_0\|_{1D,y}\sim\mathcal{O}(h^{1/2})$, which together with the uniform boundedness of $\|\Phi^{-1}\|$ in Lemma \ref{lemma_eig}, leads to 
\begin{equation}\label{tau_norm} 
\|\hat\tau_0\|_{1D,y}\leq \|\Phi^{-1}\|\|\hat T_0\|_{1D,y}\leq K h^{1/2}.
\end{equation}
 A direct application of Theorem \ref{MAIN} to this problem thus leads to an additional gain of a half order in convergence, compared with when the truncation error is present along the entire boundary. However, numerical experiments for certain problems show another half order better result. Below we demonstrate how to sharpen the analysis so that a full order gain in convergence is obtained compared with the case when the dominating truncation error is located along an entire boundary. 

One difference between the $r^{th}$ error equation \eqref{eqn_sca} and the standard one dimensional error equation \eqref{err_L_1d} is that the coefficient $\tilde s^2$ in \eqref{err_L_1d} is replaced by $(\tilde s^{(r)}_+)^2=\tilde s^2+h^2\lambda^{(r)}$ in \eqref{eqn_sca}. We can therefore view $h^2\lambda^{(r)}$ as a perturbation to the error equation, which enables us to use the one dimensional results presented in Section \ref{sec_background}. To obtain a sharp estimate for a two dimensional problem when the large truncation error is located only at a few grid points, we cannot rely on a uniform estimate in $r$ as in  \eqref{estimate_r}. Instead, the discrete eigenvalues $\lambda^{(r)}$ are divided into two sets according to indices $ r = 1,\cdots, r_{\delta}$ and $r = r_{\delta}+1, \cdots, N_y$, where $r_{\delta}$ is such that
\begin{align}
&\sqrt{h^2\lambda^{(r)}} \leq \delta,\quad r = 1,\cdots, r_{\delta}, \label{rdelta1} \\
&\sqrt{h^2\lambda^{(r)}} > \delta,\quad r = r_{\delta}+1, \cdots, N_y, \label{rdelta2}
\end{align}
for some constant $\delta > 0$ independent of $h$. Since we have a consistent spatial discretization the discrete eigenvalues in equation \eqref{eigen1}, $\lambda^{(r)}$, converge to the eigenvalues, $\lambda_c$, of the corresponding continuous problem
\begin{align}
\label{cont_eig}
\frac{\partial^2}{\partial y^2}\varphi_c =- \lambda_c \varphi_c,
\end{align}
where \eqref{cont_eig} is closed with the same boundary conditions as the ones approximated by the discrete operator in \eqref{eigen1}. Therefore, $h^2\lambda^{(r)}$ for a particular $r$ decreases when $h$ decreases, and thus $r_{\delta}$ grows with grid refinement. 

It is helpful to first consider a specific example. Let 
\[Q_y = \begin{bmatrix}
-1& 1&  &&\\
1& -2 &1 &  & \\
 & 1& -2& 1&  \\
&&\ddots &\ddots& \ddots
\end{bmatrix},\]
which corresponds to the standard second order accurate discretization of the Laplace operator in one dimension with Neumann boundary conditions. The operator $Q_y/h^2$ is diagonalized by the standard unitary cosine transform, for which we have the operator $\Phi$ defined by
\begin{align}
\Phi_{i,r} = \left\{ \begin{tabular}{l}
$\frac{1}{\sqrt{N_y}}$, $i = 1, \cdots, N_y, \ r=1$, \notag \\
$\sqrt{\frac{2}{N_y}} \cos(\frac{\pi (r-1) (i-1/2)}{N_y})$,  \ $i = 1, \cdots, N_y, \quad r = 2, \cdots, N_y$, \notag \\ 
\end{tabular} \right.
\end{align}
and the discrete eigenvalues are given by 
\begin{equation}\label{continuous_eig}
\lambda^{(r)} = \frac{4}{h^2} \sin^2 \left(\frac{\pi (r-1)}{2N_y} \right),\ r = 1, \cdots, N_y.
\end{equation}
Note that $\lambda^{(r)}$, converge to the eigenvalues, 
\begin{equation*}
\lambda_c^{(r)}=(r-1)^2\pi^2,\ r=1,2,\cdots,
\end{equation*}
of \eqref{cont_eig} with Neumann boundary conditions. Furthermore, from the cosine transform of $(1, 0, \cdots)^T$, we immediately see that
\begin{align}
\hat \tau_0 = \frac{\hat T_{0,0}}{\sqrt{N_y}} \left(1, \sqrt{2}\cos\left(\frac{\pi}{2 N_y}\right),  \sqrt{2}\cos\left(\frac{2 \pi}{2 N_y}\right),  \sqrt{2}\cos\left(\frac{3 \pi}{2 N_y}\right), \cdots \right)^T.
\end{align}
Here we note the agreement with the uncertainty principle \cite{Pinsky2009}, which states that the more concentrated a function is, the more spread out its Fourier transform must be. In other words, locality in $\hat T_0$ rules out locality in $\hat\tau_0$.   

We shall now consider a more general case, where $Q_y/h^2$ is a consistent spatial discretization satisfying Assumption \ref{antar_Qy}. An analytical formula of $\lambda^{(r)}$ may not exist, but the  eigenvalues of the discrete operator converge to the eigenvalues of the corresponding continuous problem \eqref{cont_eig} with the same boundary conditions. To ensure a corresponding locality principle in the more general case, we make the following assumption on the spatial discretization in the $y$--direction.

\begin{assumption}\label{antar_tau}
The diagonalized transformation in Lemma \ref{lemma_eig} satisfies the uncertainty principle in the sense that when $T_0 = [\mathcal{O}(1), 0, \cdots]^T$ then each component of $\hat \tau_0 = \Phi^{-1} T_0$ satisfies
\begin{equation}\label{tau_bound}
|\hat \tau_{0}^{(r)}| = \mathcal{O}(h^{1/2}).
\end{equation} 
\end{assumption}

By \eqref{tau_norm} we know that $\|\hat\tau_0\|_{1D,y}$ is proportional to $h^{1/2}$. Assumption \ref{antar_tau} implies a component--wise bound \eqref{tau_bound} of $\hat\tau_0^{(r)}$, and rules out the possibility of a local character of the truncation error also in the spectral representation. Assumption \ref{antar_tau} is a reasonable assumption because when the operator $\Phi^{-1}$ acts on $\hat T_0$, it is an analogue of performing a discrete Fourier transform. It is a discrete Fourier transform if the boundary condition in the $y$ direction is periodic. In the following analysis, we use the component--wise bound for $r\leq r_{\delta}$.

The total error for the two dimensional Laplace--transformed and spectrally decomposed problem is given by
\begin{equation}\label{2D_error_transformed}
\| \hat \epsilon\|_{2D}^2 = h \sum_{r=1}^{N_y} \| \hat \epsilon^{(r)}\|_{1D,x}^2= h \sum_{r=1}^{N_y} \left( h \sum_{n=1}^{d} |\hat \epsilon^{(r)}_n|^2 + h \sum_{j=1}^l\ \frac{|\sigma^{(r)}_j|^2}{1-|\kappa^{(r)}_j|^2}\right). 
\end{equation}
Note the similarity between \eqref{2D_error_transformed} and the expression for $\| \hat \zeta\|_{2D}^2$ in \eqref{L2_e_hat}. To obtain sharp accuracy results we divide \eqref{2D_error_transformed} into  
\begin{align}
\| \hat \epsilon^{(r \leq r_{\delta})}\|_{2D}^2 = h \sum_{r=1}^{r_{\delta}} \|\hat \epsilon^{(r)}\|_{1D,x}^2,
\end{align}
and
\begin{align}
\| \hat \epsilon^{(r > r_{\delta})}\|_{2D}^2 = h \sum_{r=r_{\delta}+1}^{N_y} \|\hat \epsilon^{(r)}\|_{1D,x}^2,
\label{large_r_sum}
\end{align}
and estimate them separately. For $r \leq r_{\delta}$, by the component--wise bound \eqref{tau_bound} an additional $1/2$ order is gained in \eqref{sigma_w}. We have
\begin{align}\label{sigma_half}
&|\hat \epsilon_n^{(r)}| \leq h^{p+5/2-w}\frac{K}{\eta^w},  \quad n= 1, \cdots, d,\\
&|\sigma_j^{(r)}| \leq h^{p+5/2-w}\frac{K}{\eta^w},  \quad  j = 1, \cdots, l, \notag 
\end{align}
where $w$ is related to the invertibility of $C(\tilde s)$ in Lemma \ref{antar_C2}. In 
\begin{equation*}
\| \hat \epsilon^{(r \leq r_{\delta})}\|_{2D}^2  
=  h \sum_{r=1}^{r_\delta} h \sum_{n=1}^{d} |\hat \epsilon^{(r)}_n|^2 + h \sum_{r=1}^{r_\delta} h \sum_{j=1}^l\ \frac{|\sigma^{(r)}_j|^2}{1-|\kappa^{(r)}_j|^2},
\end{equation*}
the first term can be bounded as 
\begin{equation}\label{term1}
 h \sum_{r=1}^{r_\delta} h \sum_{n=1}^{d} |\hat \epsilon^{(r)}_n|^2\leq h^{2p+7-2w}\sum_{r=1}^{r_\delta} \frac{K}{\eta^{2w}},
\end{equation}
while the second term
\begin{equation}\label{term2}
h \sum_{r=1}^{r_\delta} h \sum_{j=1}^l\ \frac{|\sigma^{(r)}_j|^2}{1-|\kappa^{(r)}_j|^2} \leq h^{2p+6-2w}\frac{K}{\eta^{2w}} \sum_{j=1}^l \sum_{r=1}^{r_\delta} \frac{h}{1-|\kappa_j^{(r)}|^2}.
\end{equation}

By Lemma \ref{antar_kappa}, only one admissible root leads to an $h^{-1}$ factor in the error estimate.  We therefore only consider $1/(1-|\kappa_1^{(r)}|^2)$ in \eqref{term2}. Together with $(\tilde s_+^{(r)})^2=\tilde s^2+h^2\lambda^{(r)}$, we obtain
\begin{align}
\frac{1}{1-|\kappa_1^{(r)}(\tilde s_+^{(r)})|^2} \leq \frac{K}{(\sqrt{\eta^2 + \lambda^{(r)}}) h}.
\end{align}

With a common boundary condition like Dirichlet, Neumann or periodic, the continuous eigenvalues $\sqrt{\lambda_c}$ in \eqref{cont_eig} are uniformly distributed. For small $r$ corresponding to the well--resolved components, the discrete eigenvalues converge to the continuous ones. As a consequence, $\sqrt{\lambda^{(r)}} \approx K r$ for $r < r_{\delta}$. This can also be seen from the formula  \eqref{continuous_eig}. For $\eta = 0$ and $\sqrt{\lambda^{(r)}} \approx K r$, we have 
\begin{align}
\sum_{r=1}^{r_{\delta}} h \frac{1}{1-|\kappa_1^{(r)}|^2} \approx h K \left( \frac{1}{h} + \frac{1}{2h} + \cdots + \frac{1}{r_{\delta} h} \right) \leq K \left( \log( h^{-1}) + \gamma + \mathcal{O}(h) \right),
\end{align}
where $r_{\delta} h = \delta$ is a constant independent of $h$ and $\gamma \approx 0.58$ is the Euler--Mascheroni constant. This leads to
\begin{equation}\label{estimate_small_r}
\| \hat \epsilon^{(r \leq r_{\delta})}\|_{2D}^2 \leq  h^{2p+6-2w}  \log( h^{-1}) \frac{K}{\eta^{2w}}.
\end{equation}

Let us now consider $r > r_{\delta}$. In this case, the shift in $\tilde s_+^{(r)}$ is $h^2\lambda^{(r)}>\delta^2\sim\mathcal{O}(1)$. Therefore, all $\kappa_j^{(r)}$ have absolute values bounded away from 1, which leads to the estimate $1/(1- |\kappa_j^{(r)}|^2) \leq K$ for a constant $K$ independent of $h$. We then have the estimate
\begin{align*}
\| \hat \epsilon^{(r > r_{\delta})}\|_{2D}^2 &= h \sum_{r=r_{\delta}+1}^{N_y}  \|\hat \epsilon^{(r)}\|_{1D,x}^2\\
&\leq K h \sum_{r=r_{\delta}+1}^{N_y} (h\sum_{i=1}^d |\hat\epsilon^{(r)}|^2+h |\sigma_1^{(r)}|^2) \\
&\leq \frac{K}{\eta^{2w}}  h \sum_{r=r_{\delta}+1}^{N_y}  h^{2p+5-2w}\|C^{-1}(\tilde s_+^{(r)})\|_{\max}^2 |\hat\tau_0^{(r)} |^2 \\
&\leq \frac{K}{\eta^{2w}}  h^{2p+5-2w} \max_{r} \|C^{-1}(\tilde s_+^{(r)})\|_{\max}^2 \|\hat\tau_0\|_{1D,y}^2\\
&\leq \frac{K}{\eta^{2w}}  h^{2p+6-2w} \max_{r} \|C^{-1}(\tilde s_+^{(r)})\|_{\max}^2.
\end{align*}
Because of the shift in $\tilde s_+^{(r)}$, the determinant matrix $C(\tilde s_+^{(r)})$ is nonsingular by stability. Hence, $\max_{r} \|C^{-1}(\tilde s_+^{(r)})\|^2_{\max}$ is a constant of order $\mathcal{O}(1)$. 

Together with \eqref{estimate_small_r}, we have the combined estimate
\begin{equation*}
\| \hat \epsilon\|_{2D}\leq  K \log (h^{-1}) h^{p+3-w},
\end{equation*}
and consequently
\begin{equation}\label{estimate_Laplace_2D}
\| \hat \zeta\|_{2D}\leq K \log (h^{-1}) \|\Phi\| h^{p+3-w}\leq K \log (h^{-1}) h^{p+3-w}.
\end{equation}
In the above estimate, we use the uniform boundedness of $\|\Phi\|$ in Lemma \ref{lemma_eig}, and $K$ depends on the true solution but not on $h$. The factor $\log(h^{-1})$ grows with mesh refinement, but much slower than $h^{-1}$. Asymptotically, only the exponent of $h$ in  \eqref{estimate_Laplace_2D} is important. Comparing the two dimensional case with a corner truncation error with the one dimensional problem, we can expect a full order gain in convergence from \eqref{estimate_Laplace} to \eqref{estimate_Laplace_2D}.

\section{Summation--by--parts finite difference methods}\label{sec-SBP}
High order finite difference methods solve wave propagation problems more efficiently than low order methods on smooth domains \cite{Hagstrom2012,Kreiss1972}. Though standard central finite difference stencils can be used in the interior of the domain, it is challenging to derive stable and accurate schemes close to boundaries and interfaces. An approach that has been successfully used to overcome this difficulty is finite difference operators satisfying a summation--by--parts (SBP) property \cite{Kreiss1974,Kreiss1977} in combination with the simultaneous--approximation--term (SAT) technique \cite{Carpenter1994} to impose boundary and interface conditions. A main advantage of the SBP--SAT method is its unified procedure to construct both provably stable and highly accurate schemes for linear time--dependent problems with boundaries and interfaces. Recent developments also include its use on non--uniform grids \cite{Fernandez2014b}, and its relation to a form of discontinuous Galerkin method \cite{Gassner2013}. Introductions of the SBP--SAT finite difference method can be found in \cite{Fernandez2014,Svard2014}.

\subsection{Semi--discretizations by the SBP operators}
To approximate a second derivative, we use SBP operators constructed in \cite{Mattsson2004}. They are central finite difference stencils in the interior of the computational domain and special one--sided stencils at a few grid points near boundaries. The operator $D\approx\partial^2/\partial x^2$  is called an SBP operator if it can be decomposed as
\begin{equation*}
D=H^{-1}(-M+BS),
\end{equation*}
where $H$ is a diagonal positive definite operator associated with an L$_2$--equivalent norm, the operator $M$ is symmetric positive semi--definite and $B$ takes the form $\text{diag}(-1,0,0,\cdots,1)$. The first and last row of $S$ approximate the first derivative at the boundary grid points.

The truncation error of the SBP operators in \cite{Mattsson2004} is $\mathcal{O}(h^{2p})$ in the interior and $\mathcal{O}(h^p)$ near boundaries, $p=1,2,3,4$. Though the accuracy is sacrificed near boundaries in order to satisfy the SBP property, the operators are often termed as $2p^{th}$ order accurate. 

In the following, we present the SBP--SAT schemes for the wave equation with Dirichlet and Neumann boundary conditions. For comparison, we also state the result of the accuracy analysis for one dimensional problems, which was derived in \cite{Wang2016}.

\subsubsection{Dirichlet boundary conditions}
The semi--discretization of the one dimensional wave equation \eqref{eqn_1d} with a Dirichlet boundary condition is 
\begin{equation}\label{semi_D}
u_{tt}=Du-H^{-1}S^T(E_0u-\bar g)-\frac{\iota}{h} H^{-1}(E_0u-\bar g)+F_g,
\end{equation}
where $E_0$ picks up $u$ at the boundary with entries zero almost everywhere except $E_0(1,1)=1$. The vector $\bar g=[g(t),0,\cdots]^T$ contains the boundary data and $F_g$ is the restriction of the forcing $F(x,t)$ onto the grid. The constant $\iota$ is independent of $h$ but must satisfy $\iota\geq\iota_0$ for stability \cite{Appelo2007}, and its lower bound $\iota_0$ is presented in \cite{Mattsson2008,Mattsson2009,Wang2016}. A stability proof of \eqref{semi_D} by the energy method can be found in \cite{Mattsson2009}. We also note that if the boundary condition  of equation \eqref{eqn_2d} at $y=0,1$ is Dirichlet and the discretization \eqref{semi_D} is used also in the $y$ direction, then the operator $Q_y$ in \eqref{semi_2d} satisfies Assumption \ref{antar_Qy} with $P=h^{-1}H$. 

\subsubsection{Neumann boundary conditions}
For the wave equation \eqref{eqn_1d} with a Neumann boundary condition, it is natural to impose the boundary condition weakly. The corresponding SBP--SAT scheme is 
\begin{equation}\label{semi_N}
u_{tt}=Du+H^{-1}(E_0Su-\bar g)+F_g,
\end{equation}
and a stability proof can be found in \cite{Mattsson2009}. We note that when equation \eqref{eqn_2d} has a Neumann boundary condition at $y=0,1$ and the discretization \eqref{semi_N} is used also in the $y$ direction, the operator $Q_y$ in \eqref{semi_2d} satisfies Assumption \ref{antar_Qy} with $P=h^{-1}H$. 

\subsubsection{Accuracy analysis for the one dimensional wave equation}\label{Accuracy1D}
The accuracy analysis of both the discretizations \eqref{semi_D} and \eqref{semi_N} are performed in detail in \cite{Wang2016}, and we summarize the main result in Table \ref{tab:1}. 
\begin{table}
\centering
\caption{Result of accuracy analysis for the one dimensional problem. $2p:$ order of the interior truncation error.  $p:$ order of the boundary truncation error. $q=\min(2p,p+\text{Gain}):$ overall convergence rate. $\iota,\iota_0:$ energy stability is obtained if the penalty parameter $\iota$ satisfies $\iota\geq\iota_0$.}
\label{tab:1}       
\begin{tabular}{c c c c c c c}
\hline\noalign{\smallskip}
 & \multicolumn{2}{c}{ $2p=2$} & \multicolumn{2}{c}{ $2p=4$}   & \multicolumn{2}{c}{ $2p=6$}    \\
 & $p+$Gain & $q$ & $p+$Gain &$q$ & $p+$Gain &$q$\\
\noalign{\smallskip}\hline\noalign{\smallskip}
Dirichlet $(\iota=\iota_0)$ & 1+0.5 & 1.5 & 2+0.5 &2.5 & 3+0.5 &3.5 \\
Dirichlet $(\iota>\iota_0)$ & 1+2 &2 & 2+2 &4 & 3+2.5 &5.5 \\
Neumann & 1+1 &2 & 2+2 &4 & 3+2.5 &5.5 \\
\noalign{\smallskip}\hline
\end{tabular}
\end{table}

For the Dirichlet problem, the penalty parameter $\iota$ in \eqref{semi_D} has an influence on the convergence rate. If $\iota=\iota_0$, the determinant condition is not satisfied, and the gain in convergence is only a half order; if $\iota>\iota_0$, the determinant condition is satisfied and $w=0$ in the estimate \eqref{estimate_Laplace}, leading to the optimal two orders gain. In fact, super--convergence is obtained with the sixth order accurate scheme, where the gain in convergence is two and a half orders. The reason of super--convergence is found by a careful analysis of the boundary system \eqref{sigma_w}, which shows that $\sigma_1\sim h^{p+3}$ in \eqref{sigma_kappa}.

For the Neumann problem, the determinant condition is always violated. A careful analysis in \cite{Wang2016} shows that the gain in convergence is 1, 2, and 2.5 for the second, fourth and sixth order accurate schemes, respectively. For the fourth and sixth order accurate schemes, the gain in convergence is at least the optimal two orders. This is because the boundary system in the normal mode analysis has a special structure, which leads to $w=0$ in the estimate \eqref{estimate_Laplace}.  We refer to a detailed discussion in \cite{Wang2016}.   We also remark that if the structure of the boundary system is perturbed by modifying the truncation error, the gain in convergence is one order in all three cases. 

By Theorem \ref{MAIN}, for the SBP--SAT approximation of the Dirichlet and Neumann problem in two space dimensions, the convergence rates are the same as the corresponding one dimensional cases shown in Table \ref{tab:1}.

\section{Numerical experiments}\label{sec_NE}
In this section, we perform numerical experiments to verify our accuracy analysis, with a focus on two different kinds of truncation errors for the wave equation in two space dimensions.  The first case is when the large truncation error is located on an entire boundary, which typically occurs in a single--block domain or a multi--block domain with conforming grid interfaces. Such an experiment is presented in Section \ref{experiment_standard}. In the second case, the truncation error is only located at a few corner points in a two dimensional domain and the number of such grid points is independent of grid refinement, see Section \ref{experiment_nonstandard}. This experiment is constructed as a simplified analogue to a multi--block setting with non--conforming grid interfaces presented in \cite{Wang2016a}. The dominating truncation error is located on a few corner points,  see an illustration in Figure  \ref{fig:grid}(A). More precisely, the $2p^{th}$ order accurate schemes have a dominating truncation error $\mathcal{O}(h^{p-2})$ because of interpolation. 

\subsection{A standard two dimensional problem}\label{experiment_standard}
We consider the two dimensional wave equation \eqref{eqn_2d} in the computational domain $[0,1]^2$ discretized on a uniform grid. The SBP finite difference operators constructed in \cite{Mattsson2004} are used to approximate the spatial derivatives, and the SAT technique is used to weakly impose all the boundary conditions according to the discretizations \eqref{semi_D} and \eqref{semi_N}. The semi--discretizations \eqref{semi_D} and \eqref{semi_N} are generalized to two space dimensions using the Kronecker product. We will consider both Dirichlet and Neumann problems, meaning that all boundary conditions are either Dirichlet or Neumann, respectively.

For hyperbolic problems, explicit methods are often used to advance the equation in time. In the following numerical experiments, we employ the fourth order Runge--Kutta method as the time integrator. In \cite{Kreiss1993}, it is shown that under reasonable conditions the fully discrete scheme is stable if the semi--discretization is stable and Runge--Kutta methods are used to discretize in time. Since we investigate the convergence rate in space, we choose the time step $\Delta t=0.1h$ small enough so that the temporal error is negligible compared with the spatial error. This step size is well below the stability limit. The final time of the simulations is chosen to be $t=2$.

The numerical solution is compared with an analytic solution constructed using the method of manufactured solutions:
\begin{equation}\label{sol_analytical}
U(x,y,t)=\cos(10\pi x+1)\cos(10\pi y+2)\cos(10\pi\sqrt{2}t+3).
\end{equation}
The L$_2$ errors are computed as the norm of the difference between the exact solution projected onto the grid with grid spacing $h$, $U_{h}$, and the corresponding numerical solution, $u_h$, according to
\begin{equation*}
\|u_h-U_{h}\|_{\text{L}_2}=h\sqrt{(u_h-U_{h})^T(u_h-U_{h})},
\end{equation*}
and the convergence rate is computed by
\begin{equation*}
q=\log\left(\frac{\|u_h-U_{h}\|_{\text{L}_2}}{\|u_{2h}-U_{2h}\|_{\text{L}_2}}\right) \bigg/ \log\left(\frac{1}{2}\right).
\end{equation*}
In the numerical experiments, we observe that the computed L$_2$ convergence rates are in agreement with the rates in the generalized sense obtained in the analysis. 

In the SBP--SAT scheme, the penalty parameter for the Dirichlet problem, $\iota$, is chosen to be $20\%$ larger than its lower bound required by energy stability. It is important to choose $\iota$ larger than the threshold $\iota_0$ to obtain the desired rate of convergence, as the choice $\iota=\iota_0$ gives suboptimal convergence. This is demonstrated in \cite{Wang2016}. We note, however, that an extremely large $\iota$ leads to a very small time step. An increase of $20\%$ of the penalty parameter seems appropriate based on our experiments.
\begin{figure}
	\subfloat[]{\includegraphics[width=0.49\textwidth]{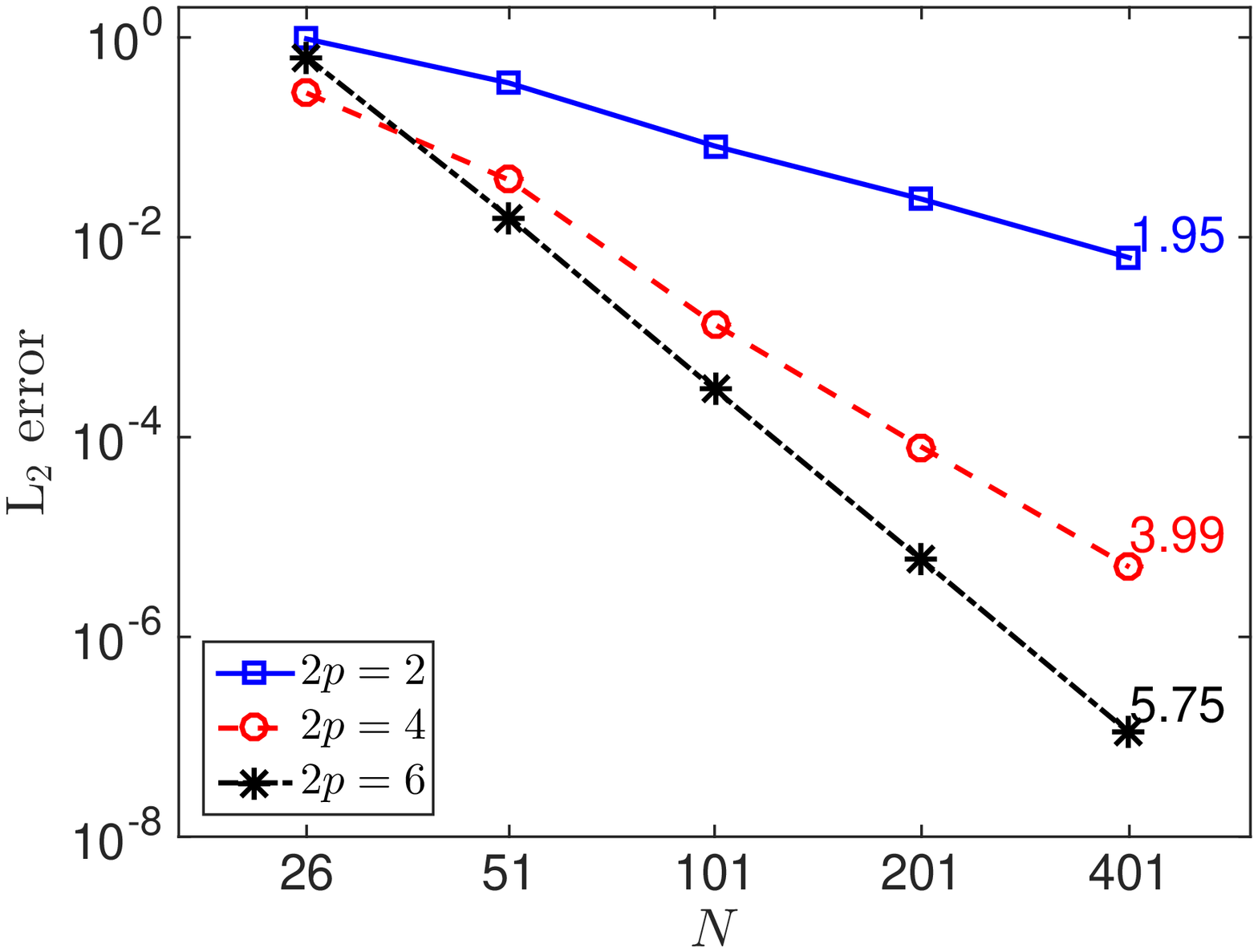}}
	\subfloat[]{\includegraphics[width=0.49\textwidth]{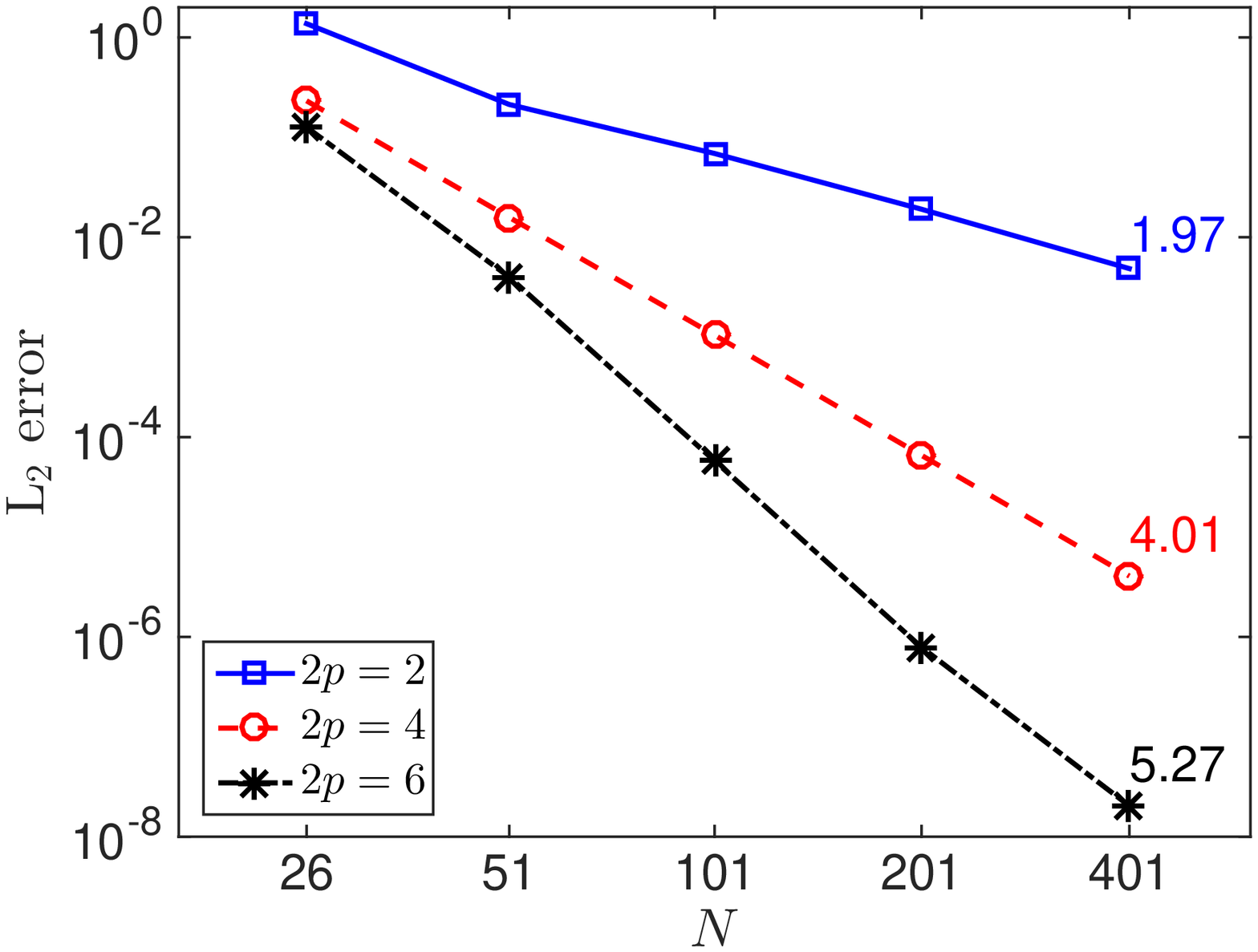}}
	\caption{Experiments for the two dimensional wave equation with (A) Dirichlet boundary conditions (B) Neumann boundary conditions.}
	\label{fig:2D}
\end{figure}

The L$_2$ errors versus the number of grid points in each spatial dimension, $N$, are plotted in Figure \ref{fig:2D}. The convergence rates computed from the last two mesh refinements for each set of experiments are shown at the end of the corresponding convergence curves in the error plots. According to Theorem \ref{MAIN}, Table \ref{tab:1} and the analysis in \cite{Wang2016}, for both the Dirichlet and Neumann problem, the SBP--SAT approximations have theoretical convergence rates 2, 4 and 5.5 for the second, fourth and sixth order accurate schemes, respectively. As seen in Figure \ref{fig:2D}, the convergence rates for the corresponding two dimensional problem are 2 and 4 for the second and fourth order accurate methods, which agree well with our analysis. The convergence rate for the sixth order accurate method is 5.75 for the Dirichlet problem and 5.27 for the Neumann problem. 

\subsection{Truncation error located on a few corner points }\label{experiment_nonstandard}
The case analyzed in Section \ref{sec_localized_error} is a simplified model of the multi--block finite difference discretization with non--conforming grid interfaces. Below we conduct  numerical experiments for this simplified model to verify that the analysis in  Section \ref{sec_localized_error} is sharp.

We use the  same setting as in Section \ref{experiment_standard}, but in this case we consider the manufactured, smooth solution
\begin{equation}\label{Exact_local}
U(x,y,t)=\cos(4x+1)\cos(4y+2)\cos(4\sqrt{2}t+3)
\end{equation}
to the two dimensional wave equation \eqref{eqn_2d}, and solve until $t=2$ using the methods of interior order $2p=2,4,6$.

To get a truncation error $\mathcal{O}(h^{p-2})$ at a few corner points, as shown in Figure 1(B), we modify the standard SBP--SAT scheme by using erroneous boundary data. For the Dirichlet problem, the true boundary data at the grid point $(0,y_i)$, denoted by $g_{i}(t)$, can be obtained from  \eqref{Exact_local} as $g_i(t)=U(0,y_i,t)$. However, in the penalty terms in the SAT method, we use $(1+\nu)g_i(t)$ on the first and last five grid points. In particular, for the $2p^{th}$ order method we choose $\nu=c_ph^{p},\ p=1,2,3$, where the factors $c_p$ are chosen to match the coefficients of the truncation error by one--sided difference stencils. Since the boundary data is multiplied by $H^{-1}S^T\sim\mathcal{O}(h^{-2})$, this choice results in a truncation error $\mathcal{O}(h^{p-2})$. We note that the stability property of the discretization does not change. We plot the L$_2$ error versus the number of grid points in each space dimension in Figure \ref{fig:corner}(A). The convergence rates 2.01, 3.07 and 4.21 imply a gain of three orders in all the three cases, and are in agreement with the error estimate \eqref{estimate_Laplace_2D}. 

\begin{figure}
	\subfloat[]{\includegraphics[width=0.49\textwidth]{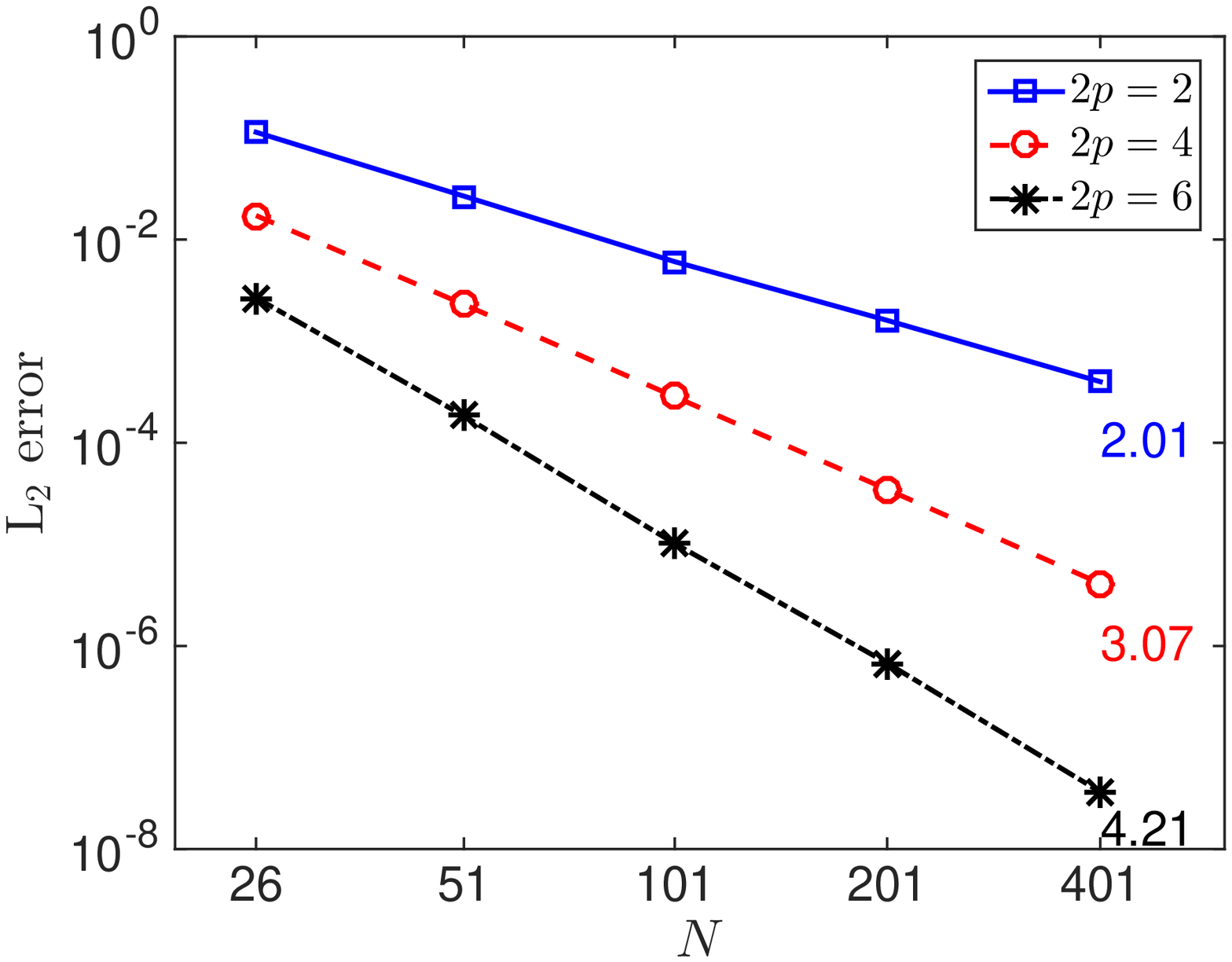}}
	\subfloat[]{\includegraphics[width=0.49\textwidth]{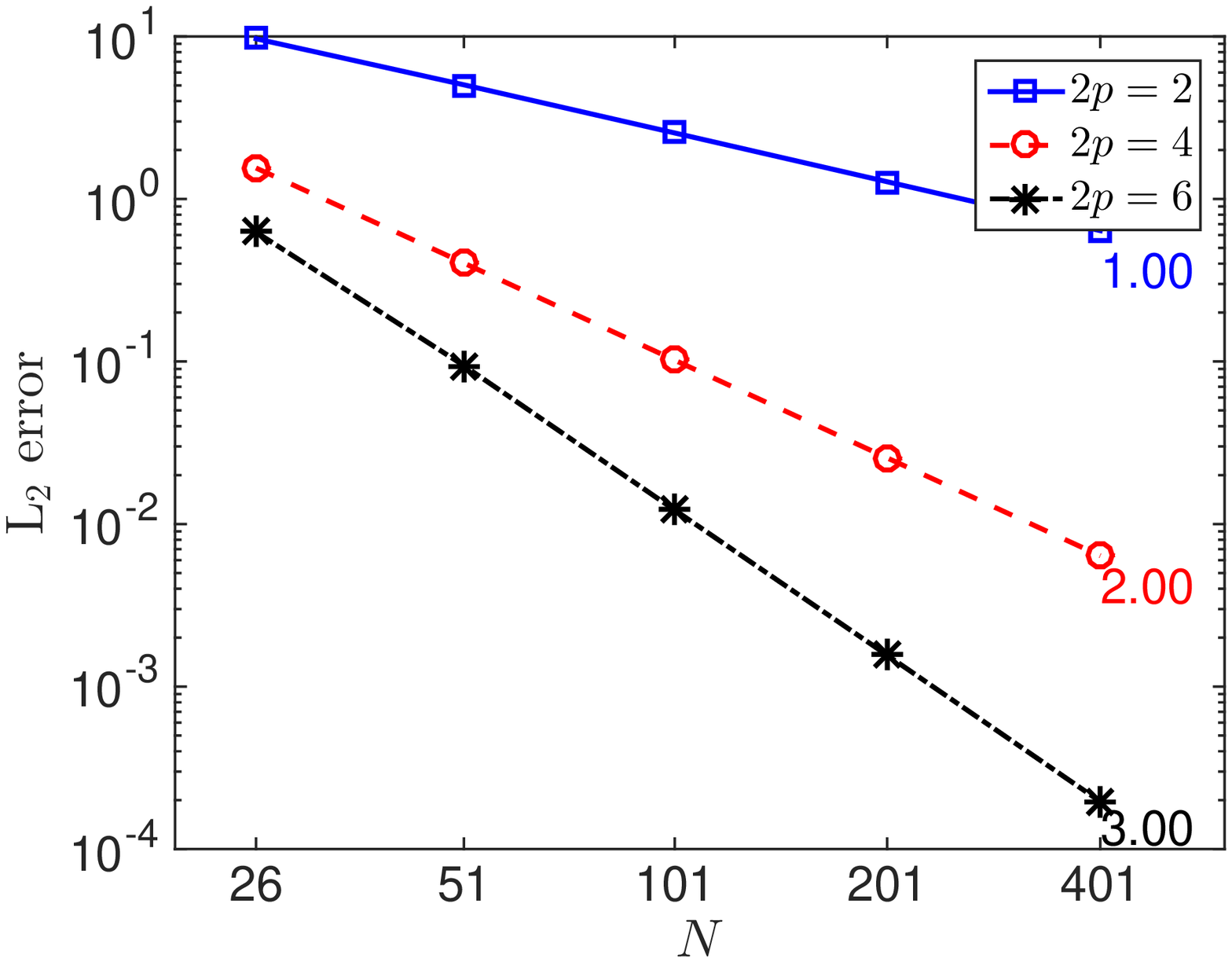}}
	\caption{ $L_2$ errors for (A) the Dirichlet  problem (B) the Neumann problem. The dominating  truncation error is $\mathcal{O}(h^{p-2})$ located at ten grid points as shown in Figure \ref{fig:grid}(B).}
	\label{fig:corner}
\end{figure}

For the Neumann problem we again use erroneous boundary data, this time by perturbing the true data by adding $\nu=c_ph^{p-1},\ p=1,2,3$. Here the data is multiplied by $H^{-1}\sim\mathcal{O}(h^{-1})$ in the penalty term, hence the truncation error at those ten grid points is as in the Dirichlet case $\mathcal{O}(h^{p-2})$. As is mentioned towards the end of Section \ref{Accuracy1D}, the perturbation changes the structure of the right--hand side of the boundary system.  The analysis for one dimensional problems presented in \cite{Wang2016} leads to $w=1$ in \eqref{estimate_Laplace_2D} for all the three cases, and a corresponding gain of two orders for the two dimensional problem. The L$_2$ errors  from the numerical computations are plotted in Figure \ref{fig:corner}(B). The convergence rates are 1.00, 2.00 and 3.00. In all the three cases, the gain is two orders, and agrees with the accuracy analysis leading to the estimate \eqref{estimate_Laplace_2D}. For neither the Dirichlet nor the Neuman problem do we observe any effect of the logarithmic term in \eqref{estimate_Laplace_2D}, but this is not surprising since asymptotically such an effect would be difficult to detect.


\section{Conclusion}
\label{sec_C}
In this paper, we extend the accuracy analysis of finite difference methods solving initial--boundary--value problems to two space dimensions. The two dimensional analysis is based on a diagonalization technique to decompose a two dimensional problem into one dimensional problems of the same type.  We then continue the analysis by utilizing the results from the one dimensional analysis. We have chosen the second order wave equation as the model problem, but the technique presented in this paper can be used to analyze other equations. 

The second contribution of this paper is the analysis of the effect of truncation errors localized  at a few grid points in a corner of a two dimensional domain. This kind of truncation errors often occur in multi--block finite difference discretizations with non--conforming grid interfaces. The analysis is performed for a simplified but analogous problem, a single block with large truncation errors at  a few grid points close to a corner. The numerical experiments for the simplified problem show that our accuracy analysis is sharp in the limit as the grid spacing approaches zero. Numerical experiments in a multi--block setting presented in \cite{Wang2016a} also agree well with this analysis.

In addition, we have presented a detailed framework of analyzing the convergence rate of one dimensional problems by the normal mode analysis, and have shown that the critical point is to derive sharp error estimates in the vicinity of $s=0$ in Laplace space. Singularities of the boundary system can also occur on the imaginary axis away from the origin, but they have no influence on the final convergence rate. 

\section*{Acknowledgements}
We would like to thank the help from Stefano Serra--Capizzano on Lemma \ref{lemma_eig}, and Bengt Fornberg on Lemma \ref{antar_kappa}, respectively. This work was performed when the second author was at the University of Bergen and was supported by VISTA (project 6357) in Norway. The first author and the third author are partially supported by the Swedish Research Council (project 106500511).

\appendix
\section{Proof of Lemma \ref{lemma_eig}}\label{ProofLemmaEig}
\begin{proof}
Let $\tilde Q_y=PQ_y$, then the eigenvalue problem \eqref{eigen1} can be written
\begin{equation}\label{eigen11}
\frac{1}{h^2}P^{-1}\tilde Q_y\varphi=-\lambda\varphi.
\end{equation}
Since $P$ is symmetric positive definite, $P^{1/2}$ and $P^{-1/2}$ are also symmetric positive definite. We rewrite \eqref{eigen11} as 
\begin{equation*}
\frac{1}{h^2}P^{-1/2}\tilde Q_yP^{-1/2}P^{1/2}\varphi=-\lambda P^{1/2}\varphi.
\end{equation*}
With the notation $\hat Q_y=P^{-1/2}\tilde Q_yP^{-1/2}$ and $\hat\varphi=P^{1/2}\varphi$, we obtain the following new eigenvalue problem 
\begin{equation}\label{eigen2}
\frac{1}{h^2}\hat Q_y\hat\varphi=-\lambda\hat\varphi.
\end{equation}
Because $\tilde Q_y$ is symmetric negative semi--definite, according to Sylvester's law of inertia, $\hat Q_y$ is also symmetric negative semi--definite. Therefore, the eigenvalues of $\hat Q_y$ are real and non--positive. The matrices $Q_y$ in \eqref{eigen1} and $\hat Q_y$ in \eqref{eigen2} are similar because $\hat Q_y=P^{1/2} Q_y P^{-1/2}$, so they have the same eigenvalues. This proves that $\lambda\geq 0$ in \eqref{eigen1}.

Let $\hat\Phi$ be the unitary operator $[\hat\varphi_1,\hat\varphi_2,\cdots,\hat\varphi_{N_y}]$, then its condition number in spectral norm is equal to one, i.e. $\chi(\hat\Phi)=1$. Because of $\hat\varphi=P^{1/2}\varphi$, we have $\hat\Phi=P^{1/2}\Phi$. As a consequence, equation \eqref{eigen1} can be written as
\begin{equation*}
\frac{1}{h^2}Q_y(P^{-1/2}\hat\Phi)=-(P^{-1/2}\hat\Phi)\Lambda.
\end{equation*}
where $\Lambda=\text{diag}(\lambda_1,\lambda_2,\cdots,\lambda_{N_y})$. Because both $P^{-1/2}$ and $\hat\Phi$ are invertible, $P^{-1/2}\hat\Phi$ is also invertible. Therefore, $Q_y$ is diagonalizable as
\begin{equation}\label{diagQy}
\frac{1}{h^2}Q_y=-(P^{-1/2}\hat\Phi)\Lambda(P^{-1/2}\hat\Phi)^{-1}.
\end{equation}
Since the spectral norm is unitarily invariant and $\Phi=P^{-1/2}\hat\Phi$, we have 
\begin{equation*}
\|\Phi\|=\|P^{-1/2}\| \text{ and } \|\Phi^{-1}\|=\|P^{1/2}\|.
\end{equation*}
By Assumption \ref{antar_Qy}, $\|\Phi\|$ and $\|\Phi^{-1}\|$ are uniformly bounded.
\end{proof}

\section{Proof of Lemma \ref{s+} }
\label{2D_analysis}
\begin{proof}
For the special case $\delta=0$, the lemma is equivalent to Lemma 4 in \cite{Wang2016}, and its proof is found there. In the following, we only consider the case when $\delta>0$. Let $\tilde s=a+bi$ and $\tilde s_+=c+di$, where $a,b,c,d$ are real numbers and $a\geq\delta> 0$.  The relation $\tilde s_+^2=\tilde s^2+\gamma$ gives
\begin{equation*}
(c+di)^2=(a+bi)^2+\gamma.
\end{equation*}
The real and imaginary part of the two sides of the above equation must be equal, which leads to
\begin{equation*}
\begin{cases}
&cd = ab, \\
&c^2-d^2=a^2-b^2+\gamma.
\end{cases}
\end{equation*}
Therefore, 
\begin{equation}\label{ca}
c^2-a^2=d^2-\frac{c^2d^2}{a^2}+\gamma.
\end{equation}
Assume Re$(\tilde s_+)<\delta$, that is $0\leq c<a$. Then we have $c^2<a^2$, which means that the left hand side of \eqref{ca} is negative. However, by $c^2<a^2$ the right hand side of \eqref{ca} 
\begin{equation*}
d^2-\frac{c^2d^2}{a^2}+\gamma >\gamma\geq 0.
\end{equation*}
This is a contradiction. Therefore, we must have Re$(\tilde s_+)\geq\delta$.
\end{proof}

\section{Proof of Lemma \ref{lemma_stability}}
\label{app_stability}
\begin{proof}
Let\begin{equation}\label{1d_ansatz}
u(t)=e^{st}\phi
\end{equation}
for some complex number $s$ and $\phi$ is a one dimensional grid function with $\|\phi\|_{1D,x}<\infty$. Substituting \eqref{1d_ansatz} to \eqref{1d_semi}, with the notation $\tilde s=sh$ we obtain the eigenvalue problem
\begin{equation}\label{1d_eig}
s^2\phi=\frac{Q}{h^2}\phi.
\end{equation}
We note that \eqref{1d_eig} is in exactly the same form as  \eqref{err_L_1d} with a zero truncation error. As a consequence, the boundary system corresponding to  \eqref{1d_eig} is
\begin{equation}\label{1d_bs}
C(\tilde s)\Sigma=\bold{0},
\end{equation}
where the left--hand side of \eqref{1d_bs} is the same as the left--hand side of \eqref{BS}.

If $C(\tilde s)$ is singular for some $\tilde s$ with Re$(\tilde s)>0$, then \eqref{1d_bs} has a non--trivial solution $\Sigma\neq\bold{0}$. It then follows that \eqref{1d_ansatz} with Re$(s)>0$ is a solution of \eqref{1d_semi}. However, this contradicts stability for the reason outlined in Lemma 12.1.1 in \cite[pp.~378]{Gustafsson2013}. For completeness, we state it below. 

We define a sequence of grids indexed by $n$
\begin{equation*}
x_j^{(n)}=j h_n, \quad h_n=\frac{h}{n}, \quad j=0,1,\cdots, n=1,2,\cdots.
\end{equation*}
The eigenvalue problem reads 
\begin{equation*}
s^2\phi^{(1)}=\frac{Q}{h^2}\phi^{(1)}.
\end{equation*}
We define a sequence of grid functions $f^{(n)}=\phi^{(1)},\quad n=1,2,\cdots$, which satisfies the eigenvalue problem
\begin{equation*}
s^2f^{(n)}=\frac{Q}{h^2}f^{(n)} \Longleftrightarrow n^2 s^2 f^{(n)} = \frac{Q}{h_n^2} f^{(n)}.
\end{equation*}
Therefore, 
\begin{equation*}
u^{(n)}(t) = e^{nst} f^{(n)}
\end{equation*}
are solutions of \eqref{1d_ansatz} and grow arbitrarily fast, i.e. a contradiction to stability. 
\end{proof}

\section{Estimates of $|\Sigma_i|$}
\subsection{Proof of Lemma \ref{antar_C2}}\label{proof_C2}
\begin{proof}
We decompose $\tilde s=i\tilde\xi+\eta h$ in the right--half plane into two parts. The first part is when $\tilde s$ is in the vicinity of the origin, i.e. $0<|\tilde s|\leq\delta$ where $\delta$ is a small constant independent of $h$. Here the range of $\tilde s$ includes a small, but $h$--independent interval of the imaginary axis. For example, we can in particular consider Re$(\tilde s)=\eta h\leq\delta/\sqrt{2}$ and $|\text{Im}(\tilde s)|\leq\delta/\sqrt{2}$. The second part is when $\tilde s$ is away from the origin, Re$(\tilde s)=\eta h$ and $|\text{Im}(\tilde s)|>\delta/\sqrt{2}\sim\mathcal{O}(1)$.

We start with the first case when $\tilde s$ is in the vicinity of the origin. The entries of $C(\tilde s)$ are continuous functions in $\tilde s$. Every entry can be expanded in terms of $\kappa(\tilde s)$ or $\tilde s$, and $\kappa(\tilde s)$ can be further expanded to its Taylor series. $C(\tilde s)$ can be written as 
\begin{equation}
C(\tilde s)=C(0)+\tilde sC'(0)+\frac{\tilde s^2}{2}C''(0)+\cdots,
\end{equation}
with the notation $C'(\tilde s)=dC(\tilde s)/d\tilde s$. An estimate of the type \eqref{sigma_w} can be obtained by using Lemma 3.4 in \cite{Nissen2012}. For completeness, we state this lemma below (notations of norms are changed to be consistent with the notations used in this paper) and explain how to use it thereafter. 

\begin{lemma}\label{Nissenslemma}
(\textit{Lemma 3.4 in \cite{Nissen2012}}) Consider the $n\times n$ linear system $(A+\delta E)x =b$ where $A$ is singular with rank $n-1$. Let $USV^{*}=A$ be the singular value decomposition of $A$. If $(U^{*}EV)_{nn}\neq 0$ then for all sufficiently small $|\delta|$ we get
\begin{equation*}
 \|(A+\delta E)^{-1}\|_{\max}\leq (2\delta(U^{*}EV)_{nn})^{-1}.
\end{equation*}
If, in addition, $b$ is in the column space of $A$, then for all sufficiently small $|\delta|$, we have $\|x\|_{max}\leq c\|b\|_{\max}$. Here $c$ is independent of $\delta$.
\end{lemma}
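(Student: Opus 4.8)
The plan is to diagonalize the singular part of the problem with the singular value decomposition and then isolate the blow-up of the inverse in a single Schur complement. Write $A=USV^{*}$ with $U,V$ unitary and $S=\mathrm{diag}(s_{1},\dots,s_{n-1},0)$, $s_{1}\ge\cdots\ge s_{n-1}>0$ (possible since $A$ has rank exactly $n-1$), and put $\tilde E=U^{*}EV$. Then $A+\delta E=U(S+\delta\tilde E)V^{*}$, so $(A+\delta E)^{-1}=V\,M^{-1}U^{*}$ with $M:=S+\delta\tilde E$, and since $U,V$ are unitary, $\|(A+\delta E)^{-1}\|_{\max}$ differs from $\|M^{-1}\|_{\max}$ by at most the fixed factor $n$, which is harmless because $n$ is the constant size of the boundary system. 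Thus it suffices to estimate $M^{-1}$.

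Next I would partition $M$ into the leading $(n-1)\times(n-1)$ block $M_{11}=S_{11}+\delta\tilde E_{11}$, the column $M_{12}=\delta\tilde E_{12}$, the row $M_{21}=\delta\tilde E_{21}$, and the scalar $M_{22}=\delta\tilde E_{nn}$, where $S_{11}=\mathrm{diag}(s_{1},\dots,s_{n-1})$. Since $S_{11}$ is invertible, $M_{11}$ is invertible for all small $|\delta|$ with $M_{11}^{-1}=S_{11}^{-1}+O(\delta)$. Applying the block-inverse (Schur complement) formula, the Schur complement of $M_{11}$ is
\[
\Sigma=M_{22}-M_{21}M_{11}^{-1}M_{12}=\delta\,\tilde E_{nn}-\delta^{2}\,\tilde E_{21}M_{11}^{-1}\tilde E_{12}=\delta\,\tilde E_{nn}\bigl(1+O(\delta)\bigr).
\]
This is exactly where the hypothesis $\tilde E_{nn}=(U^{*}EV)_{nn}\neq0$ is used: it makes $\Sigma\neq0$ for all sufficiently small $|\delta|$, so $M$ is invertible, with $(M^{-1})_{nn}=\Sigma^{-1}$ of size $(\delta\,\tilde E_{nn})^{-1}(1+O(\delta))$. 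In the remaining blocks of $M^{-1}$, namely $M_{11}^{-1}+M_{11}^{-1}M_{12}\Sigma^{-1}M_{21}M_{11}^{-1}$, $-M_{11}^{-1}M_{12}\Sigma^{-1}$ and $-\Sigma^{-1}M_{21}M_{11}^{-1}$, every appearance of $\Sigma^{-1}=O(\delta^{-1})$ is accompanied by at least one factor $M_{12}$ or $M_{21}$, each of size $O(\delta)$, so all of those entries are $O(1)$. Hence $\|M^{-1}\|_{\max}$ is dominated by $|(M^{-1})_{nn}|$, which gives the stated bound of the form $(2\delta\,(U^{*}EV)_{nn})^{-1}$ once $|\delta|$ is small enough that the lower-order terms are absorbed.

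For the second claim, observe that the column space of $A=USV^{*}$ is the span of the first $n-1$ columns of $U$, so $b$ lies in it precisely when $(U^{*}b)_{n}=0$; write $U^{*}b=(c,0)^{T}$ with $c\in\mathbb{C}^{n-1}$. Then $x=(A+\delta E)^{-1}b=V\,M^{-1}(c,0)^{T}$, and from the block inverse the first $n-1$ entries of $M^{-1}(c,0)^{T}$ are $\bigl(M_{11}^{-1}+O(1)\bigr)c=O(1)\,c$, while the last entry is $-\Sigma^{-1}M_{21}M_{11}^{-1}c$, in which the $O(\delta^{-1})$ factor $\Sigma^{-1}$ is again cancelled by the $O(\delta)$ factor $M_{21}$, leaving $O(1)\,c$. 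Thus the $\delta^{-1}$ singularity of $M^{-1}$ is never excited---it couples only to the vanishing last component of $U^{*}b$---and $\|x\|_{\max}\le c\,\|b\|_{\max}$ with $c$ independent of $\delta$.

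The only point that needs real care is the bookkeeping in the Schur-complement expansion: one must verify that $\Sigma$ is of size exactly $\Theta(\delta)$ (so $\Sigma^{-1}$ is $O(\delta^{-1})$ and no worse), and that the single power of $\delta$ carried by each of $M_{12}$ and $M_{21}$ genuinely cancels the $\delta^{-1}$ from $\Sigma^{-1}$ so that every block other than the $(n,n)$ slot stays uniformly bounded as $\delta\to0$. Everything else---existence of the SVD, invertibility of $M_{11}$ for small $\delta$, unitary invariance of norms up to the fixed factor $n$, and the characterization of the column space via $U$---is routine.
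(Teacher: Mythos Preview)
The paper does not actually prove this lemma: it is quoted verbatim from \cite{Nissen2012} inside the proof of Lemma~\ref{antar_C2} and then \emph{applied}, not rederived. So there is no ``paper's own proof'' to compare against, and your task reduces to whether your argument stands on its own.

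It does. Passing to the SVD basis so that the singular direction sits in the last coordinate, block–partitioning $M=S+\delta\tilde E$, and reading off the inverse via the Schur complement $\Sigma=\delta\tilde E_{nn}(1+O(\delta))$ is exactly the right mechanism: it explains simultaneously why $(U^{*}EV)_{nn}\neq 0$ is the relevant nondegeneracy condition, why the blow-up of $M^{-1}$ is of order $|\delta|^{-1}$ and concentrated in the $(n,n)$ slot, and why the condition ``$b$ in the column space of $A$'' (equivalently $(U^{*}b)_{n}=0$) prevents that slot from being excited. Your bookkeeping on the off-diagonal blocks, where each occurrence of $\Sigma^{-1}=O(\delta^{-1})$ is paired with at least one factor $M_{12}$ or $M_{21}$ of size $O(\delta)$, is also correct.

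One small refinement: to recover the precise constant in the stated inequality rather than a bound with an extra factor of $n$, it is cleaner to pass through the spectral norm, which \emph{is} unitarily invariant, via
\[
\|(A+\delta E)^{-1}\|_{\max}\le \|(A+\delta E)^{-1}\|_{2}=\|M^{-1}\|_{2},
\]
and then bound $\|M^{-1}\|_{2}$ directly from your block formula (the dominant contribution is $|\Sigma|^{-1}$, and for small $|\delta|$ the remaining $O(1)$ blocks are absorbed into the factor~$2$). Your route through $\|M^{-1}\|_{\max}$ with the factor~$n$ is of course adequate for how the lemma is used here, since $n$ is the fixed size of the boundary system.
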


The first step of using Lemma  \ref{Nissenslemma} is to perform the singular value decomposition (SVD) of $C(0)=\mathcal{USV^*}$. The value of $w$ in \eqref{sigma_w} depends on 1) whether $(U^*C'(0)V)_{nn}$ is equal to zero; 2) whether $\hat T_C$ is in the column space of $C(0)$.
\begin{itemize}
\item When $(U^*C'(0)V)_{nn}\neq 0$: if $T_C$ is in the column space of $C(0)$ then  $w=0$; otherwise $\|C^{-1}(\tilde s)\|_{\max} \sim1/(\eta h)$ and $w=1$.
\item When $(U^*C'(0)V)_{nn}= 0$: we need to take into account the third term in the Taylor series of $C(\tilde s)$, i.e. $C^{''}(0)$, and check whether $(U^*C^{''}(0)V)_{nn}$ is equal to zero. More generally, we obtain the estimate \eqref{sigma_w} with $m=1,2,\cdots,w-1$ if
\begin{equation*}
\left.\left(\mathcal{U}^*\frac{d^mC(\tilde s)}{d\tilde s^m}\right\vert_{\tilde s=0}\mathcal{V}\right)_{nn}=0\text{ and }\left.\left(\mathcal{U}^*\frac{d^wC(\tilde s)}{d\tilde s^w}\right\vert_{\tilde s=0}\mathcal{V}\right)_{nn}\neq0.
\end{equation*}
\end{itemize}
If $w$ in \eqref{sigma_w} is infinite, then $C(\tilde s)$ is singular for some $\tilde s$ with a positive real part. This contradicts to the stability of the numerical scheme. Therefore, with a stable discretization $w$ is always finite.

Next, we consider the case when $\tilde s$ is away from the origin. Assume that $C(\tilde s)$ is singular at $\tilde s=i\tilde\xi$ for some $\tilde\xi$ where $|\tilde s|>\delta\geq 0$ with $\delta$ independent of $h$. Now we may need to use a Puiseux series for $\kappa(\tilde s)$. This leads to a similar expansion of $C(\tilde s)$ as above, but with a non--integer leading order exponent. We round the exponent to its nearest integer, denoted by $\alpha$, towards positive infinity. By the same argument as the case $|\tilde s|\leq\delta$, at  $\tilde s=i\tilde\xi+\eta h$ we have
\begin{equation*}
\|C^{-1}(\tilde s)\|_{\max}\leq \frac{K}{(\eta h)^\alpha}.
\end{equation*}
The non--zero components of $\hat T_C$ are in the form $\partial^{p+2}/\partial x^{p+2} \hat U(0,s)$ as given in \eqref{Tform}. The key is to realize that
\begin{equation*}
\mathcal{L}\left[\frac{\partial^\alpha}{\partial t^\alpha}\frac{\partial^{p+2}U(0,t)}{\partial x^{p+2}}\right]=s^\alpha\frac{\partial^{p+2}}{\partial x^{p+2}}\hat U(0,s),
 \end{equation*}
 where $\mathcal{L}$ is the Laplace transform operator in $t$. Here we have used the compatibility condition between the initial and boundary data.  At  $\tilde s=sh=(i\xi+\eta)h$ we have
 \begin{equation*}
\left\vert\frac{\partial^{p+2}}{\partial x^{p+2}}\hat U(0,s)\right\vert\leq\frac{1}{|s^\alpha|}\left|\mathcal{L}\left[\frac{\partial^\alpha}{\partial t^\alpha}\frac{\partial^{p+2}}{\partial x^{p+2}}U(0,t)\right]\right|. \end{equation*}
Since the singularity is away from the origin, we have
\begin{equation*}
\|\Sigma\|_{\max}\leq \|C^{-1}(\tilde s)\|_{\max} \|\hat T_C\|_{\max} \leq\frac{K}{\eta^\alpha h^\alpha}\frac{1}{|s^\alpha|}\left|\mathcal{L}\left[\frac{\partial^\alpha}{\partial t^\alpha}\frac{\partial^{p+2}}{\partial x^{p+2}}U(0,t)\right]\right|\leq\frac{K}{\eta^{\alpha}\delta^\alpha}\sim\mathcal{O}(1).
 \end{equation*}
This proves that no accuracy loss is caused by the singularity of $C(\tilde s)$ away from the origin. Note that by using this lemma the term 
$\|\mathcal{L}(\partial^\alpha T_C/\partial t^\alpha)\|_{\max}$ is added to the final estimate.
\end{proof}

\section{Estimates of $1/(1-|\kappa_j|^2)$}\label{estimating_one_over_kappa}

\subsection{Proof of Lemma \ref{antar_kappa}}\label{proof_kappa}
\begin{proof}
The characteristic equation takes the general form
\begin{equation}\label{ch_eqn_2l}
\sum_{j=0}^{2l} a_{j}\kappa^j=\tilde s^2\kappa^l,
\end{equation}
where $a_j$ are the coefficients of the standard central finite difference stencils \cite{Fornberg1998}. In the estimate, we only need to consider the $l$ admissible roots and derive estimates for $1/(1-|\kappa_j|^2),\ j=1,2,\cdots,l$. We start with the case when $\tilde s=i\tilde\xi+\eta h$ is in the vicinity of the origin, i.e. $|\tilde s|\leq\delta$ where $\delta$ and $\eta$ are small constants independent of $h$. 

If a root $|\kappa_j(0)|<1$ then a perturbation analysis straightforwardly leads to $|\kappa_j(\tilde s)|<1$ and consequently $1/(1-|\kappa_j(\tilde s)|^2)$ is bounded independently of $h$. However, for a root $|\kappa_j(0)|=1$ a careful derivation is needed to obtain the precise dependence of $1/(1-|\kappa_j(\tilde s)|^2)$ on $h$. 

To proceed, we derive another form of the characteristic equation. Since admissible roots are only related to interior stencils, we consider a uniform grid in one space dimension 
\begin{equation*}
x_j=jh,\quad j=0,\pm 1,\pm 2,\cdots,
\end{equation*}
with grid spacing $h$. We denote $D^{(2l)}$ the $2l^{th}$ order accurate central finite difference operator approximating $\partial^2/\partial x^2$, and apply it to the mode $e^{i\omega x_j}$, where $-\pi<\omega h\leq\pi$. Similarly to the derivation in \cite[pp.~41]{Fornberg1996}, the operator $D^{(2l)}$ acting on the mode $e^{i\omega x_j}$ gives
\begin{equation*}
D^{(2l)}e^{i\omega x_j}=\frac{1}{h^2} f(l,\omega h)e^{i\omega x_j},
\end{equation*}
where
\begin{equation*}
f(l,\omega h)=-\sum_{n=0}^{l-1} \frac{2(n!)^2}{(2n+2)!}\left(4\sin^2\frac{\omega h}{2}\right)^{n+1}.
\end{equation*} 
The corresponding characteristic equation is
\begin{equation}\label{f_xi}
f(l,\omega h)=\tilde s^2.
\end{equation}

A root $|\kappa|=1$ at $\tilde s=0$ corresponds to a root $f(l,\omega h)=0$ for $-\pi<\omega h\leq\pi$. It is obvious that $f(l,\omega h)=0$ has a double root $\omega=0$. In addition, $f(l,\omega h)$ is a monotone decreasing function in $l$ so the only roots of $f(l,\omega h)=0$ are the double root $\omega=0$, corresponding to a double root $\kappa=1$ of the characteristic equation \eqref{ch_eqn_2l} with $\tilde s=0$. We therefore factorize \eqref{ch_eqn_2l}  to 
\begin{equation}\label{ch_factorized}
(\kappa-1)^2 P(\kappa)=0,
\end{equation}
where $P(1)\neq 0$. When $\tilde s=i\tilde\xi+\eta h$, the double root is perturbed to two single roots, where precisely one is admissible. Substituting the admissible root $\kappa_\nu=1+\nu$ to \eqref{ch_factorized}, we obtain
 \begin{equation*}
\nu^2 =\tilde s^2\kappa_\nu^l / P(\kappa_\nu).
\end{equation*}
The value of $\kappa_\nu^l / P(\kappa_\nu)$ to the leading order is real and $\mathcal{O}(1)$. Therefore, there exists constant $K_1$ and $K_2$ independent of $h$ such that 
\begin{equation*}
|\text{Re}(\nu)|\geq K_1\eta h \text{ and } |\text{Im}(\nu)|\geq K_2|\tilde\xi|.
\end{equation*} 
Note that the admissibility condition leads to Re$(\nu)<0$. We now have the estimate 
\begin{equation*}
\frac{1}{1-|\kappa_\nu(\tilde s)|^2}=\frac{1}{1-|1+\nu|^2}=\frac{1}{-2\text{Re}(\nu)-\text{Re}(\nu)^2-\text{Im}(\nu)^2}\leq \frac{K}{\eta h},  
\end{equation*}
for $\tilde s$ in a vicinity of the origin. 

Next, we consider the case when $\tilde s=i\tilde\xi+\eta h$ is away from the origin. Assume an admissible root $|\kappa_a(i\tilde\xi+\eta h)|<1$ satisfies $|\kappa_a(i\tilde\xi)|=1$. The expansion of $\kappa_a(i\tilde\xi+\eta h)$ around $\tilde s=i\tilde\xi$ leads to
\begin{equation}\label{kappa_w}
\frac{1}{1-|\kappa_a(\tilde s)|^2}\leq\frac{K}{(\eta h)^\beta},
\end{equation}
where $\beta$ is the leading order exponent in its Puiseux series rounded to the nearest integer towards positive infinity. We note that when estimating the error \eqref{L2_e_hat}, $\frac{1}{1-|\kappa_a(\tilde s)|^2}$ is multiplied by $|\sigma_a|^2$, which is computed by solving the boundary system \eqref{BS} and is related to the spatial derivatives of the true solution. We can therefore eliminate the $h$--dependence in \eqref{kappa_w} in the same manner as in Lemma \ref{antar_C2}, which increases the order of temporal derivative of the true solution in the final estimate from $\alpha$ to $\alpha+\beta$. 
\end{proof}

\end{document}